\newcommand{\FF} {{\mathbb F}}
\newcommand{\RR} {{\mathbb R}}
\newcommand{\CC} {{\mathbb C}}
\newcommand{\QQ} {{\mathbb Q}}
\newcommand{\ZZ} {{\mathbb Z}}
\newcommand{\SSS}{{\mathbb S}}
\newcommand{\BBB}{{\mathbb B}}
\newcommand{\FFF}{{\mathcal F}}
\newcommand{\symm} {{\mathfrak S}}
\newcommand{\sgn} {{\mathop{\rm sgn}\nolimits}}
\newcommand{\rank} {{\mathop{\rm rank}\nolimits}}
\newcommand{\str} {{\mathop{\rm star}\nolimits}}
\newcommand{\Bd} {{\mathop{\rm Bd}\nolimits}}
\newcommand{\coker}{{\mathop{\rm coker}\nolimits}}
\newcommand{\comment}[1]{}
\newtheorem{theorem}{Theorem}
\newtheorem{definition}[theorem]{Definition}
\newtheorem{corollary}[theorem]{Corollary}
\newtheorem{proposition}[theorem]{Proposition}
\newtheorem{question}[theorem]{Question}
\newtheorem{remark}[theorem]{Remark}
\newtheorem{example}[theorem]{Example}
\begin{document}
\title{The cyclotomic polynomial topologically}

\author{Gregg Musiker}
\address{Gregg Musiker \\
School of Mathematics\\
University of Minnesota\\
Minneapolis, MN 55455}
\email{musiker@math.umn.edu}

\author{Victor Reiner}
\address{Victor Reiner \\
School of Mathematics\\
University of Minnesota\\
Minneapolis, MN 55455}
\email{reiner@math.umn.edu}

\keywords{Cyclotomic polynomial, higher-dimensional tree, matroid duality, oriented matroid, simplicial matroid, simplicial homology}
\subjclass[2000]{Primary 05B35; Secondary 11R18,55U10}

\thanks{First author supported by NSF Postdoctoral Fellowship and NSF grant DMS-1067183.
Second author supported by NSF grant DMS-1001933.}

\dedicatory{To the memory of Mark Feshbach.}

\begin{abstract}
We interpret the coefficients of the cyclotomic polynomial
in terms of simplicial homology.
\end{abstract}

\maketitle

\section{Introduction}
\label{sec:introduction}

This paper studies the cyclotomic polynomial $\Phi_n(x)$, which is
defined as the minimal polynomial over $\QQ$ 
for any primitive $n^{th}$ root of unity $\zeta$ in $\CC$.
It is monic, irreducible, and has degree  
given by the Euler phi function $\phi(n)$, with formula 
$$
\Phi_n(x) = \prod_{j \in (\ZZ/n\ZZ)^\times}
                     (x-\zeta^j).
$$
The equation
\begin{equation}
\label{eqn:recurrence-precursor}
x^n-1 = \prod_{d | n} \Phi_d(x)
\end{equation}
gives a recurrence showing that all coefficients of $\Phi_n(x)$ lie in $\ZZ$.  

Although well-studied, the coefficients of $\Phi_n(x)$ are mysterious \cite{Bachman, Elder, Moree, Kaplan, Kaz, Lenstra, Zhao}.  We offer here two interpretations for their magnitudes,
as orders of cyclic groups.
In the initial interpretation (Corollary~\ref{cor:first-interpretation} below)
this group is a quotient of the free abelian group $\ZZ[\zeta]$
by a certain full rank sublattice.

The second interpretation is topological,
given by Theorem~\ref{thm:second-interpretation} below,
as the torsion in the homology of a certain simplicial complex associated with a
squarefree integer $n=p_1 \cdots p_d$.
These simplicial complexes originally arose in the work of Bolker \cite{Bolker}, 
reappeared in the work of Kalai \cite{Kalai} and Adin \cite{Adin} 
on higher-dimensional matrix-tree theorems, and were shown to be 
connected with cyclotomic extensions in work of J. Martin and the 
second author \cite{MartinR}.  We review these simplicial
complexes briefly here in order to state
the result;  see Section \ref{sec:rooting} for more details.

Given a positive integer $p$, let $K_p$ denote a $0$-dimensional abstract
simplicial complex having $p$ vertices\footnote{Note that here $K_p$ does 
{\it not} refer to a complete graph on $p$ vertices;  we hope that this
causes no confusion.}, which we will label by
the residues 
$$
\{ 0\bmod{p}, \,\, 1\bmod{p}, \,\, \ldots, \,\, (p-1)\bmod{p}\}
$$ 
for reasons that will become clear in a moment.

Given primes $p_1,\ldots,p_d$, let
$$
K_{p_1,\ldots,p_d}:=K_{p_1} * \cdots * K_{p_d}
$$ 
be the {\it simplicial join}, \cite[\S 62]{Munkres}, 
of $K_{p_1}, \ldots, K_{p_d}$.  This is a 
pure $(d-1)$-dimensional abstract simplicial complex, that
may be thought of as the {\it complete $d$-partite 
complex} on vertex sets $K_{p_1}$ through $K_{p_d}$ of sizes $p_1, \dots, p_d$.

The {\it facets} (maximal simplices) of $K_{p_1, \ldots, p_d}$ are labelled
by sequences of residues $(j_1\bmod{p_1}, \ldots, j_d\bmod{p_d})$.  Denoting 
the squarefree product $p_1 \cdots p_d$ by $n$, the
Chinese Remainder Theorem isomorphism
\begin{equation}
\label{eqn:Chinese-remainder}
\ZZ/p_1\ZZ \,\, \times \cdots \times \,\, \ZZ/p_d\ZZ \,\, 
\overset{\Xi}{\longrightarrow} 
\,\, \ZZ/n\ZZ
\end{equation}
allows one to label such a facet by a residue $j\bmod{n}$;  call this facet
$F_{j\bmod{n}}$.  Then for any subset $A \subseteq \{0,1,\ldots,\phi(n)\}$,
let $K_A$ denote the subcomplex of $K_{p_1,\ldots,p_d}$ which is generated
by the facets $\{ F_{j\bmod{n}} \}$ as $j$ runs through the following set of
residues:
$$
A \cup \{\phi(n)+1, \,\, \phi(n)+2, \,\, \ldots, \,\, n-2, \,\, n-1\}.
$$

Our first main result interprets the magnitudes of the 
coefficients of $\Phi_n(x)$.  Let $\tilde{H}_i(-;\ZZ)$ denote reduced simplicial homology with coefficients in $\ZZ$.

\begin{theorem}
\label{thm:second-interpretation}
For a squarefree positive integer $n=p_1 \cdots p_d$, with
cyclotomic polynomial $\Phi_n(x) = \sum_{j=0}^{\phi(n)} c_j x^j$,
one has
$$
\tilde{H}_i(K_{\{j\}} ; \ZZ) =
\begin{cases}
\ZZ/c_j\ZZ &\text{ if } i=d-2, \\
\ZZ        &\text{ if both }i=d-1 \text{ and }c_j =0,\\
0          &\text{ otherwise.}
\end{cases}
$$
\end{theorem}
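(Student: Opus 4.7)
The plan is to compute $\tilde{H}_*(K_{\{j\}})$ in three stages. First, I would show that passing from $K := K_{p_1,\ldots,p_d}$ to $K_{\{j\}}$ preserves the entire $(d-2)$-skeleton. Second, I would use the long exact sequence of the pair $(K, K_{\{j\}})$ to reduce the computation to a single map $\psi: \tilde{H}_{d-1}(K) \to H_{d-1}(K, K_{\{j\}})$. Third, I would identify $\psi$ with the map appearing in the first interpretation of Corollary~\ref{cor:first-interpretation}, whose cokernel is $\ZZ/c_j\ZZ$.

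For stage one, a $(d-2)$-face of $K$ is obtained by choosing a vertex in each part except some part $i_0$, and its $p_{i_0}$ containing facets correspond via $\Xi$ to an arithmetic progression in $\ZZ/n\ZZ$ of step $n/p_{i_0}$ and length $p_{i_0}$, with span $(p_{i_0}-1)n/p_{i_0} = n - n/p_{i_0}$ when viewed as integers in $\{0,1,\ldots,n-1\}$. For $d \geq 2$ the strict inequality $\prod_{i \neq i_0}(1 - 1/p_i) < 1$ rearranges to $n - \phi(n) > n/p_{i_0}$, so this span strictly exceeds $\phi(n)$ and the progression cannot fit inside $\{0, 1, \ldots, \phi(n)\}$; it must therefore meet $\{\phi(n)+1, \ldots, n-1\}$, so the given $(d-2)$-face lies in some facet of $K_{\{j\}}$. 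Combined with $\tilde{H}_i(K) = 0$ for $i < d-1$ (from the iterated join-of-spheres computation), this immediately yields $\tilde{H}_i(K_{\{j\}}; \ZZ) = 0$ for all $i < d - 2$.

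For stage two, the identical $(d-2)$-skeletons mean the relative chain complex $C_*(K, K_{\{j\}})$ is concentrated in dimension $d-1$, with basis the $\phi(n)$ missing facets $\{F_k : k \in \{0, 1, \ldots, \phi(n)\},\ k \neq j\}$. Thus $H_{d-1}(K, K_{\{j\}}) \cong \ZZ^{\phi(n)}$ while all other relative groups vanish, and combined with $\tilde{H}_{d-1}(K) \cong \ZZ^{\phi(n)}$ the long exact sequence collapses to
\[
0 \to \tilde{H}_{d-1}(K_{\{j\}}) \to \tilde{H}_{d-1}(K) \xrightarrow{\ \psi\ } H_{d-1}(K, K_{\{j\}}) \to \tilde{H}_{d-2}(K_{\{j\}}) \to 0,
\]
where $\psi$ records, for each top cycle $z \in \ker \partial_{d-1}$, its coefficients on the missing facets. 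In stage three, the cyclotomic ``rooting'' developed in Section~\ref{sec:rooting} supplies an isomorphism $\tilde{H}_{d-1}(K) \cong \ZZ[\zeta]$ that is compatible with the facet labeling by $\ZZ/n\ZZ$, and under this identification $\psi$ becomes, up to signs, the map $\ZZ[\zeta] \to \bigoplus_{k \neq j} \ZZ$ featured in the first interpretation. By Corollary~\ref{cor:first-interpretation} its cokernel is $\ZZ/c_j\ZZ$, yielding $\tilde{H}_{d-2}(K_{\{j\}}) \cong \ZZ/c_j\ZZ$. Since source and target of $\psi$ both have rank $\phi(n)$, a rank count shows that $\ker \psi = \tilde{H}_{d-1}(K_{\{j\}})$ is $0$ when $c_j \neq 0$ and is free of rank one when $c_j = 0$, matching the stated formula.

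The main obstacle is stage three: making the isomorphism $\tilde{H}_{d-1}(K) \cong \ZZ[\zeta]$ precise in a manner compatible with the $\Xi$-labeling of facets, and then matching the induced map $\psi$ with the one from Corollary~\ref{cor:first-interpretation}. This is where the genuinely cyclotomic content of the theorem lies; stages one and two are more routine, relying only on the elementary inequality above and on standard homological algebra.
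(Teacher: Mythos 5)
Your stages one and two are sound: stage one essentially reproves Proposition~\ref{full-skeleton-prop}, and stage two is a clean long-exact-sequence reduction of the problem to computing the cokernel of the map $\psi\colon\tilde{H}_{d-1}(K)\to H_{d-1}(K,K_{\{j\}})$. This is a genuinely different scaffolding from the paper's, which works through torsion-free simplicial spanning trees (Section~\ref{sec:rooting}) and Propositions~\ref{prop:Kalai} and~\ref{rooting-key-observation} rather than the long exact sequence of a pair. The problem is in stage three, and it is exactly where you flag difficulty --- but it is a real gap in the plan, not merely a detail to fill in. The map in Corollary~\ref{cor:first-interpretation} whose cokernel is $\ZZ/c_j\ZZ$ has source $\ZZ^{\phi(n)}$ (indexed by $A$) and target $\ZZ[\zeta]$; there is no ``map $\ZZ[\zeta]\to\bigoplus_{k\neq j}\ZZ$ featured in the first interpretation'' for $\psi$ to become, so your plan as written has the arrows pointing the wrong way. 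Moreover the identification $\tilde H_{d-1}(K)\cong\ZZ[\zeta]$ you invoke is not supplied by Section~\ref{sec:rooting}; what Proposition~\ref{prop:cyclotomic-and-simplicial-are-dual}, a saturation argument, and the surjectivity of $\ZZ^n\to\ZZ[\zeta]$, $e_\ell\mapsto\zeta^\ell$, actually give is that $Z_{d-1}(K)=\ker\partial_{d-1}$ equals the $\ZZ$-row space of the matrix $M$ expressing $\mu_n$ in a $\ZZ$-basis of $\ZZ[\zeta]$, so that $Z_{d-1}(K)\cong\ZZ[\zeta]^*$, and under this identification $\psi$ becomes the \emph{transpose} $(M|_{T^c})^t$ of the square matrix from the proof of Proposition~\ref{prop:monic-coefficient-interpretation}.

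To close the gap you therefore need one more ingredient: for a square integer matrix $A$ one has $\coker A\cong\coker A^t$, immediate from Smith normal form. With that in hand, stage three works and yields $\coker\psi\cong\coker\bigl((M|_{T^c})^t\bigr)\cong\coker(M|_{T^c})\cong\ZZ/c_j\ZZ$, as wanted. It is worth noticing that this transpose-of-cokernel fact is precisely the distilled content of the paper's Proposition~\ref{prop:duality}(ii). So while your long-exact-sequence route bypasses the explicit spanning-tree machinery and is arguably more transparent geometrically, it does not sidestep the matroid-duality core of the argument; it reaches the same transpose observation by a shorter path.
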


\noindent
It should be noted that, starting with $d =3$ and for large enough primes $p_i$, one can exhibit 
$(d-1)$-dimensional subcomplexes of $K_{p_1,\ldots,p_d}$ with the following properties:  
like $K_{\{j\}}$, their only nonvanishing homology group lies in dimension $d-2$ and consists 
entirely of torsion, but unlike $K_{\{j\}}$, 
this torsion group need not be cyclic and can require arbitrarily many generators.

We furthermore interpret topologically the signs of the coefficients in $\Phi_n(x)$.
For this, we use oriented simplicial homology, and orient the facet $F_{j\bmod{n}}$ 
having $j \equiv j_i \bmod{p_i}$ for 
$i=1,2,\ldots,d$ as
\begin{equation}
\label{eqn:oriented-facet}
[F_j] = [F_{j\bmod{n}}]=[j_1\bmod{p_1}, \,\, \ldots, \,\, j_d\bmod{p_d}].
\end{equation}

\begin{theorem}
\label{thm:sign-interpretation}
Fix a squarefree positive integer $n=p_1 \cdots p_d$ with 
cyclotomic polynomial $\Phi_n(x) = \sum_{j=0}^{\phi(n)} c_j x^j$.
Then for any $j \neq j'$ such that $c_j,c_{j'} \neq 0$, 
one has 
$
\tilde{H}_{d-1}(K_{\{j,j'\}};\ZZ) \cong \ZZ,
$
and any nonzero $(d-1)$-cycle $z=\sum_{\ell} b_\ell [F_\ell]$ 
in this homology group 
will have $b_{j}, b_{j'} \neq 0$, with
$$
\frac{c_j}{c_{j'}} = - \frac{b_{j'}}{b_{j}}.
$$
In particular, $c_j,c_{j'}$ have the same sign
if and only if $b_j, b_{j'}$ have opposite signs.
\end{theorem}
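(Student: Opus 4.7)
The plan is to build an explicit $\ZZ$-basis of $\tilde H_{d-1}(K_{p_1,\ldots,p_d})$ from powers of a primitive $n$-th root of unity $\zeta$, and then to read the cycle structure of $K_{\{j,j'\}}$ directly off a single linear relation coming from $\Phi_n(\zeta)=0$.

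The first step is to form the ``universal'' top chain
$$
Z_\zeta \;=\; \sum_{\ell=0}^{n-1} \zeta^\ell \, [F_\ell]
$$
with coefficients in $\ZZ[\zeta]$. A direct computation shows $\partial Z_\zeta=0$: after grouping boundary terms by which coordinate is omitted and using the Chinese Remainder labeling \eqref{eqn:Chinese-remainder}, each resulting face acquires a coefficient of the form $\zeta^{\ell_0}\sum_{s=0}^{p_i-1}\zeta^{sn/p_i}$, which vanishes because $\zeta^{n/p_i}$ is a nontrivial $p_i$-th root of unity. Expanding $Z_\zeta = \sum_{k=0}^{\phi(n)-1} \zeta^k Z_k$ against the $\ZZ$-basis $\{1,\zeta,\ldots,\zeta^{\phi(n)-1}\}$ of $\ZZ[\zeta]$ then exhibits $\phi(n)$ integer cycles $Z_k=\sum_\ell e_{\ell,k}[F_\ell]$. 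The relation $\Phi_n(\zeta)=0$ together with the monicity $c_{\phi(n)}=1$ pins down the values that will matter: $e_{\ell,k}=\delta_{\ell,k}$ for $\ell<\phi(n)$, and $e_{\phi(n),k}=-c_k$.

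These cycles will be shown to form a $\ZZ$-basis of $\tilde H_{d-1}(K_{p_1,\ldots,p_d})$: the identity block in the coefficient matrix gives both linear independence over $\QQ$ and the integrality of the coordinates of any integer cycle in this basis, since the $k$-th coordinate is read off as the integer coefficient of $[F_k]$ for $k<\phi(n)$. A cycle $z=\sum_k a_k Z_k$ then belongs to $K_{\{j,j'\}}$ exactly when the coefficients of $[F_\ell]$ in $z$ vanish for every $\ell\in\{0,\ldots,\phi(n)\}\setminus\{j,j'\}$. In the generic case $j,j'<\phi(n)$, the vanishing at $\ell<\phi(n)$ forces $a_\ell=0$ for $\ell\neq j,j'$, and the $\ell=\phi(n)$ vanishing reduces to the single linear relation
$$
a_j c_j + a_{j'} c_{j'} \;=\; 0.
$$
With $c_j,c_{j'}\neq 0$, the integer solutions form a rank-one lattice, so $\tilde H_{d-1}(K_{\{j,j'\}})\cong\ZZ$; moreover $b_j=a_j$ and $b_{j'}=a_{j'}$ are both nonzero in any nonzero cycle and satisfy $c_j/c_{j'}=-b_{j'}/b_j$. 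The edge case $j'=\phi(n)$ simply drops the $F_{\phi(n)}$-constraint, leaving $b_j=a_j$ and $b_{\phi(n)}=-a_j c_j$, which obeys the same ratio formula since $c_{\phi(n)}=1$.

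The main technical point is the integrality claim: that $\{Z_0,\ldots,Z_{\phi(n)-1}\}$ is a $\ZZ$-basis and not merely a $\QQ$-basis for the cycle lattice. Without it, a stray integer cycle with non-integer $(a_j,a_{j'})$ could live in $K_{\{j,j'\}}$ and spoil the ratio. The identity block $(e_{\ell,k})_{\ell,k<\phi(n)}$ is exactly what rules this out, reducing the theorem to the single identity read off the $\ell=\phi(n)$ row of the coefficient matrix.
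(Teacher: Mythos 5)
Your argument takes a genuinely different and more concrete route than the paper. Where the paper passes through oriented-matroid duality (Proposition~\ref{prop:oriented-matroid}) applied to the matrices $M, M^\perp$ from the proof of Theorem~\ref{thm:all-together}, with invertible minors supplied by Propositions~\ref{prop:Johnsen-precision} and~\ref{prop:contractible-tree}, you build an explicit integral cycle basis: the $\ZZ[\zeta]$-valued chain $Z_\zeta=\sum_\ell\zeta^\ell[F_\ell]$ is a cycle by the orthogonality underlying Proposition~\ref{prop:cyclotomic-and-simplicial-are-dual}, and expanding in the $\ZZ$-basis $\{1,\zeta,\ldots,\zeta^{\phi(n)-1}\}$ of $\ZZ[\zeta]$ yields integer cycles $Z_0,\ldots,Z_{\phi(n)-1}$ whose coefficient matrix has an identity block in rows $\ell<\phi(n)$ and the row $(-c_0,\ldots,-c_{\phi(n)-1})$ at $\ell=\phi(n)$. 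The whole theorem is then read off a single linear relation $a_j c_j + a_{j'} c_{j'}=0$, sidestepping the matroid formalism. Your $Z_\zeta$ is essentially a Fourier-transform move, close in spirit to Meshulam's alternate proof cited in the introduction; what the paper's route buys in exchange is uniformity, since Theorem~\ref{thm:all-together} simultaneously furnishes the homology computations for both main theorems.

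There is, however, a gap at the $\ZZ$-basis step. The identity block establishes $\QQ$-linear independence of $Z_0,\ldots,Z_{\phi(n)-1}$, and it shows that any integer top cycle \emph{that lies in their $\QQ$-span} has integer coordinates --- but it does not show that every top cycle lies in that span. You implicitly need $\rank_\ZZ\tilde{H}_{d-1}(K_{p_1,\ldots,p_d};\ZZ)\leq\phi(n)$; equivalently, that a top cycle supported only on the facets $F_\ell$ with $\phi(n)\leq\ell\leq n-1$ must vanish. Without this, the reduction ``a cycle $z=\sum_k a_k Z_k$ belongs to $K_{\{j,j'\}}$ exactly when\ldots'' could silently miss cycles outside the span and fail to pin $\tilde{H}_{d-1}(K_{\{j,j'\}})$ down to rank one. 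This input is not free: it is precisely what the paper's spanning-tree machinery (Proposition~\ref{prop:shellability} together with Proposition~\ref{prop:contractible-tree}) is there to deliver, or alternatively it follows from the standard fact that the join $K_{p_1}*\cdots*K_{p_d}$ is homotopy equivalent to a wedge of $\phi(n)$ spheres of dimension $d-1$. Once that fact is cited or proved, the rest of your argument, including the edge case $j'=\phi(n)$, is correct.
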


\begin{example} \rm
\label{ex:n15} 
We illustrate Theorems~\ref{thm:second-interpretation} and ~\ref{thm:sign-interpretation} 
for $n=15$.  Here $d=2,
p_1=3,p_2=5$, and $\phi(n)=2\cdot4=8$.
The cyclotomic polynomial is
$$
\begin{aligned}
\Phi_{15}(x) &= 1 - x + x^3 - x^4 +x^5 - x^7+x^8 \\
            &=(+1) \cdot( x^0 +x^3+x^5+x^8) + (-1) \cdot (x^1 + x^4 +x^7 ) + 0 \cdot (x^2 +x^6).
\end{aligned}
$$

\begin{figure}
\epsfxsize=125mm
\epsfbox{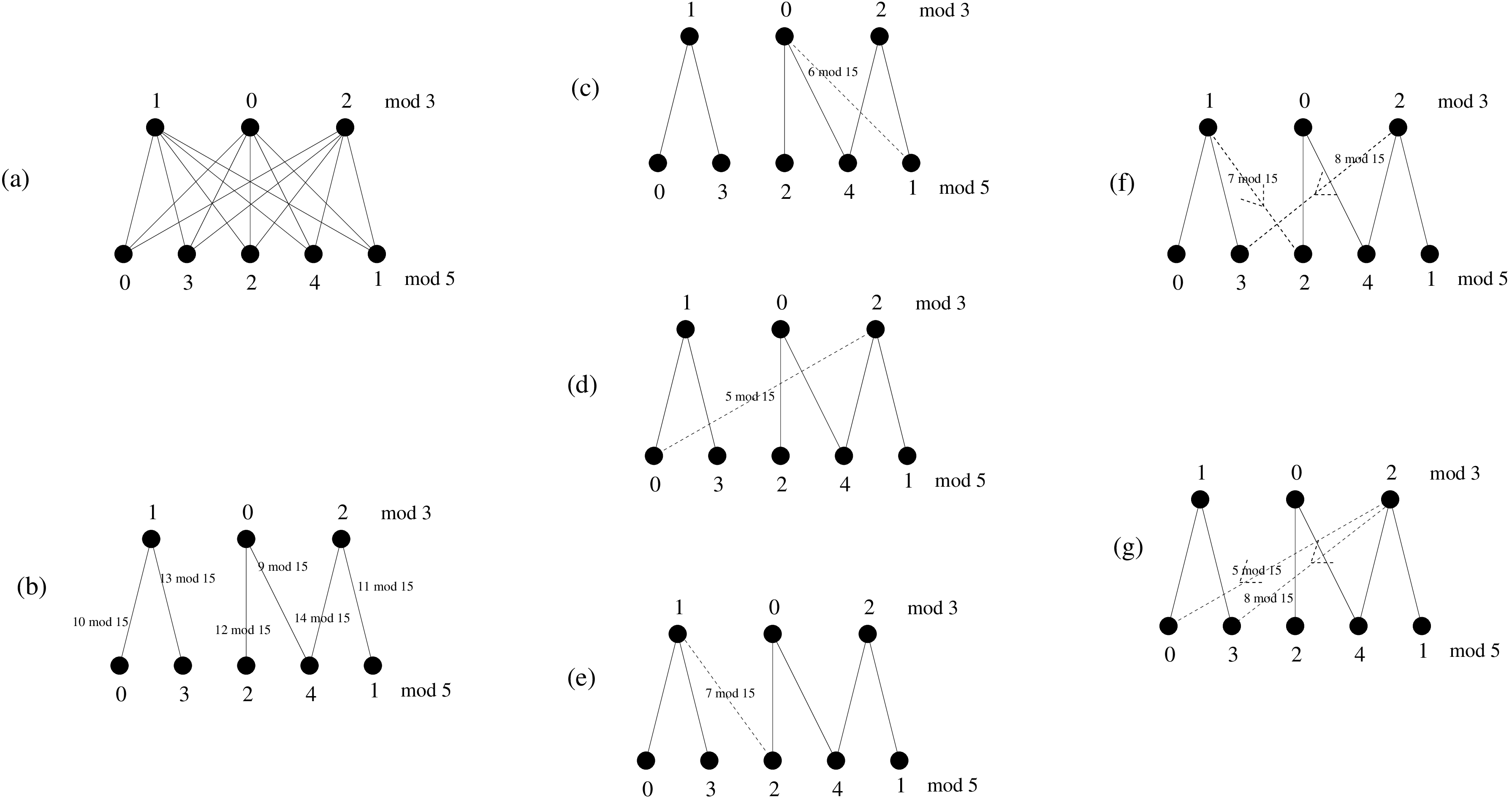}
\caption{The case of $\Phi_{15}(x)$}
\label{intro-example-figure}
\end{figure}

\noindent
The complex $K_{p_1,p_2}=K_{3,5}$ is a complete bipartite graph with vertex sets
labelled as in Figure 1(a) .  The subcomplex $K_\varnothing$ generated
by the edges $F_{j\bmod{15}}$ with $j \in \{\phi(n)+1,\phi(n)+2,\ldots,n-1\}=\{9,10,11,12,13,14\}$
is the subgraph shown in Figure 1(b).  

To see why the coefficient $c_6=0$ in $\Phi_{15}(x)$, one adds the 
edge $F_{6\bmod{15}}$ to the graph  $K_\varnothing$, obtaining the
graph $K_{\{6\}}$, shown in Figure 1(c), 
which has 
$$
\begin{aligned}
\tilde{H}_0(K_{\{6\}};\ZZ) &= \ZZ = \ZZ/0\ZZ \\
\tilde{H}_1(K_{\{6\}};\ZZ) &= \ZZ.
\end{aligned}
$$

To see why the coefficients $c_5=+1$ or $c_7=-1$
have magnitude $1$, one adds the 
edge $F_{5\bmod{15}}$ or $F_{7\bmod{15}}$ to the graph  $K_\varnothing$, obtaining the
graphs $K_{\{5\}}$ or $K_{\{7\}}$ shown in Figures 1(d) and 1(e),
which have
$$
\begin{array}{rcccl}
\tilde{H}_0(K_{\{5\}};\ZZ) & = & 0 & = & \ZZ/(+1)\ZZ \\
\tilde{H}_0(K_{\{7\}};\ZZ) & = & 0 & = & \ZZ/(-1)\ZZ.
\end{array}
$$

To understand the signs of the coefficients, note first that,
by convention, $\Phi_{15}(x)$ is monic, so the coefficient $c_8=c_{\phi(n)}=+1$.
Therefore any other coefficient $c_j$ should have sign
$$
\sgn(c_j) = \frac{\sgn(c_j)}{\sgn(c_8)} = - \frac{\sgn(b_8)}{\sgn(b_j)}
$$
where $z=\sum_{i} b_i [F_i]$ is a nontrivial cycle in $K_{\{j,8\}}$, in which
the edge $[F_j]$ is directed from the vertex 
$(j_1\bmod{3})$ toward the vertex $(j_2\bmod{5})$.  As shown
in Figures 1(f) and 1(g), the nontrivial cycle in $K_{\{7,8\}}$ has $[F_7],[F_8]$ oriented
in the {\it same} direction, explaining why $c_7=-1$, while the nontrivial cycle 
in $K_{\{5,8\}}$ has $[F_5],[F_8]$ oriented
in the {\it opposite} direction, explaining why $c_5=+1$.
\end{example}

  The remainder of the paper is structured as follows.  
Section~\ref{sec:first-interpretation} describes an initial interpretation
for the cyclotomic polynomial, which applies much more generally to any
monic polynomial in $\ZZ[x]$.  Section~\ref{sec:duality} reviews some
facts, underlying the main results,
about duality of matroids, Pl\"ucker coordinates, and oriented matroids.  
Section~\ref{sec:rooting} recalls results and 
establishes terminology on Kalai's higher dimensional 
spanning trees in a simplicial complex.  
Section~\ref{sec:d-partite} discusses further properties of the 
simplicial complex $K_{p_1,\ldots,p_d}$ whose subcomplexes appear in 
Theorem~\ref{thm:second-interpretation} and \ref{thm:sign-interpretation}.
Section~\ref{sec:all-together} prove these theorems.
Section~\ref{sec:attachment} gives some reformulations of
Theorem~\ref{thm:second-interpretation} and \ref{thm:sign-interpretation}
suggested to the authors by D. Fuchs.
Section~\ref{sec:concordance} explains how well-known properties of
the cyclotomic polynomial manifest themselves topologically.

We remark that, since the submission of this paper for publication, an alternate proof of Theorem~\ref{thm:second-interpretation} has appeared in the work of R. Meshulam \cite{Meshulam}, using the {\it Fourier transform} on the group $\ZZ_{p_1} \times \cdots \times \ZZ_{p_d}$.

\section{Coefficients of monic polynomials in $\ZZ[x]$}
\label{sec:first-interpretation}

Our goal here is an initial interpretation for the coefficients of
$\Phi_n(x)$, which applies more generally to the coefficients of
{\it any} monic polynomial $f(x)$ in $\ZZ[x]$.
Recall that when $f(x)$ is of degree $r$, one has an isomorphism of $\ZZ$-modules
$$
\begin{aligned}
\ZZ^r &\longrightarrow &\ZZ[x]/(f(x)) \\
(a_0,a_1,\ldots,a_{r-1})&\longmapsto & \sum_{j=0}^{r-1} a_j \overline{x}^j.
\end{aligned}
$$
As notation, let $R:=\ZZ[x]/(f(x))$, and
for a subset $A$ of some abelian group,
let $\ZZ A$ denote the collection of all $\ZZ$-linear 
combinations of elements of $A$.  

\begin{proposition}
\label{prop:monic-coefficient-interpretation}
For a monic polynomial $f(x)=\sum_{j=0}^r c_j x^j$ of degree $r$ in $\ZZ[x]$, 
one has an isomorphism of abelian groups
$$
R /  \ZZ A \quad  \cong  \quad \ZZ/c_j\ZZ
$$
where  
$A$ is the subset of size $r$ given as $\{\overline{1},\overline{x},\overline{x}^2,\ldots,\overline{x}^r \} \setminus \{\overline{x}^j\}$.
\end{proposition}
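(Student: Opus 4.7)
The plan is to compute the quotient $R/\ZZ A$ very concretely using the standard $\ZZ$-basis of $R$ and the fact that $f$ is monic.

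First I would reduce to the case $j<r$. When $j=r$, the set $A$ is exactly the standard basis $\{\overline{1}, \overline{x}, \ldots, \overline{x}^{r-1}\}$ of $R$, so $\ZZ A = R$, and the quotient is trivial $= \ZZ/1\ZZ = \ZZ/c_r\ZZ$ since $f$ is monic. Now assume $j<r$. Identify $R$ with $\ZZ^r$ via the basis $\{\overline{x}^i : 0 \le i \le r-1\}$, so that $\overline{x}^i$ corresponds to the standard basis vector $e_i$ for $i<r$, while the relation $f(\overline{x})=0$ in $R$ forces
$$
\overline{x}^r \;=\; -\sum_{i=0}^{r-1} c_i\, \overline{x}^i \;\longleftrightarrow\; (-c_0,-c_1,\ldots,-c_{r-1}).
$$

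Under this identification, $\ZZ A$ is the subgroup of $\ZZ^r$ generated by the $r-1$ basis vectors $\{e_i : 0 \le i \le r-1,\, i \neq j\}$ together with the one extra vector $(-c_0,\ldots,-c_{r-1})$ coming from $\overline{x}^r$. Killing off those $r-1$ basis vectors collapses $\ZZ^r$ onto the coordinate line $\ZZ \cdot e_j \cong \ZZ$, and the image of the extra generator $(-c_0,\ldots,-c_{r-1})$ there is simply $-c_j$. Hence
$$
R/\ZZ A \;\cong\; \ZZ/(-c_j)\ZZ \;=\; \ZZ/c_j\ZZ,
$$
which also covers the case $c_j=0$, where the quotient is $\ZZ$ as expected.

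There is not really a main obstacle: everything follows from the monic hypothesis, which guarantees both that $\{\overline{1},\ldots,\overline{x}^{r-1}\}$ is a $\ZZ$-basis of $R$ and that the single ``wrap-around'' relation $\overline{x}^r = -\sum c_i \overline{x}^i$ has its $j^{th}$ coefficient equal to $-c_j$. If I wanted to avoid the coordinate bookkeeping, an equivalent formulation would be to lift back to $\ZZ[x]$: if $m\,\overline{x}^j$ lies in $\ZZ A$, then $m\,x^j - \sum_{i\neq j} a_i x^i = t\,f(x)$ in $\ZZ[x]$ for some $t\in \ZZ$, and comparing the $x^r$ and $x^j$ coefficients gives $t=-a_r$ and $m = t c_j$, so $c_j \mid m$; combined with $c_j \overline{x}^j \in \ZZ A$ (from $f(\overline{x})=0$), this gives the claim.
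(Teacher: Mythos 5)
Your proof is correct and is essentially the same argument as the paper's, just phrased slightly differently: where the paper writes down the $r\times(r+1)$ change-of-basis matrix, restricts to the columns in $A$, and diagonalizes by row/column operations to get $(1,\ldots,1,-c_j)$, you instead note directly that modding out by the standard basis vectors $e_i$ $(i\neq j)$ projects $\ZZ^r$ onto its $j^{\mathrm{th}}$ coordinate and that the remaining generator $(-c_0,\ldots,-c_{r-1})$ then contributes $-c_j$. Both hinge on the same facts -- $\{\overline{1},\ldots,\overline{x}^{r-1}\}$ is a $\ZZ$-basis of $R$ because $f$ is monic, and $\overline{x}^r=-\sum_{i<r}c_i\overline{x}^i$ -- so the two proofs are the same in substance.
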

\begin{proof}
Consider the matrix in $\ZZ^{r \times (r+1)}$
$$
\left[
\begin{matrix}
1 & 0 & 0 & \cdots & 0 & -c_0 \\
0 & 1 & 0 & \cdots & 0 & -c_1 \\
0 & 0 & 1 & \cdots & 0 & -c_2 \\
\vdots & \vdots & \vdots &  & \vdots & \vdots \\
0 & 0 & 0 & \cdots & 1 & -c_{r-1} \\
\end{matrix}
\right]
$$
whose columns express the elements of 
$\{\overline{1},\overline{x},\overline{x}^2,\ldots,\overline{x}^r \}$
uniquely in the $\ZZ$-basis
$\{\overline{1},\overline{x},\overline{x}^2,\ldots,\overline{x}^{r-1} \}$ 
for $R=\ZZ[x]/(f)$.  The $r \times r$ submatrix obtained by restricting this matrix to
the columns indexed by $A$ is equivalent by row and column operations (invertible over $\ZZ$) to a diagonal matrix with diagonal entries $(1,1,\ldots, 1, -c_j)$.
Hence $R/\ZZ A \cong \ZZ/c_j\ZZ$.
\end{proof}

The special case where $f(x)$ is the cyclotomic polynomial $\Phi_n(x)$
leads to the following considerations.
Fix once and for all a primitive
$n^{th}$ root of unity $\zeta$.

\begin{corollary}
\label{cor:first-interpretation}
The cyclotomic polynomial $\Phi_n(x)=\sum_{j=0}^{\phi(n)} c_j x^j$ has
$$
\ZZ[\zeta]/\ZZ A \cong \ZZ/c_j\ZZ
$$
where 
$
A=\{1,\zeta,\zeta^2, \ldots, \zeta^{\phi(n)} \} \setminus \{\zeta^j\}.
$
\end{corollary}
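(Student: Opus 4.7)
The plan is to derive this as a direct specialization of Proposition \ref{prop:monic-coefficient-interpretation}, once we identify $\ZZ[\zeta]$ with the ring $R = \ZZ[x]/(\Phi_n(x))$. The cyclotomic polynomial $\Phi_n(x)$ is a monic polynomial in $\ZZ[x]$ of degree $\phi(n)$, so Proposition \ref{prop:monic-coefficient-interpretation} applies with $f(x) = \Phi_n(x)$ and $r = \phi(n)$, yielding
$$
R / \ZZ \{\overline{1},\overline{x},\ldots,\overline{x}^{\phi(n)}\} \setminus \{\overline{x}^j\} \;\cong\; \ZZ/c_j \ZZ.
$$

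The remaining step is to justify the ring isomorphism $\ZZ[x]/(\Phi_n(x)) \xrightarrow{\sim} \ZZ[\zeta]$ sending $\overline{x} \mapsto \zeta$. The evaluation map $\ZZ[x] \to \ZZ[\zeta]$, $x \mapsto \zeta$, is surjective by the definition of $\ZZ[\zeta]$ as a subring of $\CC$, and its kernel is the ideal $(\Phi_n(x))$: since $\Phi_n$ is monic and irreducible in $\QQ[x]$ with $\Phi_n(\zeta) = 0$, any $g(x) \in \ZZ[x]$ vanishing at $\zeta$ is divisible by $\Phi_n(x)$ in $\QQ[x]$, and because $\Phi_n(x)$ is monic, Gauss's lemma (or direct polynomial division) shows the quotient in fact lies in $\ZZ[x]$. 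Under this isomorphism the image of $\overline{x}^k$ is $\zeta^k$, so the set $\{\overline{1},\overline{x},\ldots,\overline{x}^{\phi(n)}\} \setminus \{\overline{x}^j\}$ corresponds exactly to the set $A = \{1,\zeta,\ldots,\zeta^{\phi(n)}\} \setminus \{\zeta^j\}$ in the statement.

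Combining these two points gives $\ZZ[\zeta]/\ZZ A \cong \ZZ/c_j \ZZ$, as desired. There is no real obstacle here; the entire content of the corollary is the standard identification of $\ZZ[\zeta]$ with $\ZZ[x]/(\Phi_n(x))$, after which the proposition does all the work.
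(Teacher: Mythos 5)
Your proof is correct and follows the same route as the paper: specialize Proposition~\ref{prop:monic-coefficient-interpretation} to $f=\Phi_n$ and transport along the evaluation isomorphism $\ZZ[x]/(\Phi_n(x))\to\ZZ[\zeta]$. You simply spell out the standard justification for that isomorphism (monic irreducible plus Gauss's lemma), which the paper takes as known.
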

\begin{proof}
Apply the previous proposition with $f(x)=\Phi_n(x)$ and $r=\phi(n)$,
noting that the ring map $\ZZ[x] \rightarrow \ZZ[\zeta]$ sending $x$ to $\zeta$
will also send $x^j$ to $\zeta^j$, and induce an isomorphism $\ZZ[x]/(\Phi_n(x)) \rightarrow \ZZ[\zeta]$.
\end{proof}

For later use (see the proof of Theorem~\ref{thm:all-together}),
we prove here that the set
$$
P_n:=\{ \zeta^m \}_{m \in \left( \ZZ/n\ZZ\right)^\times} 
$$
of all {\it primitive} $n^{th}$ roots of unity within
$\ZZ[\zeta]$ forms a $\ZZ$-basis whenever $n$ is squarefree.
This is a sharpening of an observation
of Johnsen \cite{Johnsen}, who noted that $P_n$ forms
a $\QQ$-basis of $\QQ[\zeta]$ in the same situation.

\begin{proposition}
\label{prop:Johnsen-precision}
When $n$ is squarefree, the collection $P_n$ of all primitive $n^{th}$ roots
of unity forms a $\ZZ$-basis for $\ZZ[\zeta]$.
\end{proposition}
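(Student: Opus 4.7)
The plan is to induct on the number $d$ of distinct primes dividing the squarefree integer $n$, exploiting a tensor product decomposition of $\ZZ[\zeta]$.

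For the base case $d=1$, so $n=p$ is prime, the identity $\Phi_p(\zeta) = 1 + \zeta + \cdots + \zeta^{p-1} = 0$ rewrites as $1 = -\zeta - \cdots - \zeta^{p-1}$, showing $P_p = \{\zeta, \zeta^2, \ldots, \zeta^{p-1}\}$ spans $\ZZ[\zeta]$ over $\ZZ$. Since $|P_p| = p - 1 = \phi(p)$ already matches the $\ZZ$-rank of $\ZZ[\zeta]$, $P_p$ is forced to be a $\ZZ$-basis.

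For the inductive step, I would write $n = n' p$ with $p = p_d$ and $n' = p_1 \cdots p_{d-1}$, so $\gcd(n', p) = 1$. Setting $\zeta_{n'} := \zeta^p$ and $\zeta_p := \zeta^{n'}$ yields primitive $n'$-th and $p$-th roots of unity, respectively, whose product $\zeta_{n'} \zeta_p = \zeta^{p + n'}$ is again a primitive $n$-th root of unity, since $\gcd(p+n', n) = 1$ is verified by reducing modulo each of $p$ and $n'$ and using $\gcd(n', p) = 1$. The heart of the argument is to show that the $\ZZ$-linear multiplication map
$$
\mu \colon \ZZ[\zeta_{n'}] \otimes_\ZZ \ZZ[\zeta_p] \longrightarrow \ZZ[\zeta]
$$
is an isomorphism. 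Surjectivity comes from the fact that $\mu(\zeta_{n'} \otimes \zeta_p) = \zeta^{p+n'}$ generates $\ZZ[\zeta]$ as a $\ZZ$-algebra (being a primitive $n$-th root of unity). Injectivity follows from torsion-freeness of the source (a tensor product of free $\ZZ$-modules) together with the fact that $\mu \otimes_\ZZ \QQ$ is an isomorphism by linear disjointness of $\QQ(\zeta_{n'})$ and $\QQ(\zeta_p)$ over $\QQ$, whose degrees $\phi(n')$ and $\phi(p)$ multiply to $\phi(n)$.

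Once $\mu$ is known to be an isomorphism, the inductive hypothesis provides a $\ZZ$-basis $P_{n'}$ of $\ZZ[\zeta_{n'}]$, the base case provides a $\ZZ$-basis $P_p$ of $\ZZ[\zeta_p]$, and so $\{x \otimes y : x \in P_{n'},\, y \in P_p\}$ is a $\ZZ$-basis of the source; applying $\mu$ produces $\{\zeta_{n'}^a \zeta_p^b : \gcd(a,n')=\gcd(b,p)=1\} = \{\zeta^{pa + n'b}\}$, which the Chinese Remainder Theorem identifies bijectively with $P_n$. I expect the main obstacle to be verifying that $\mu$ is an isomorphism, as this is where squarefreeness of $n$ enters essentially; as a backup, one may cite the classical algebraic number theory fact that $\ZZ[\zeta_{n'}]$ and $\ZZ[\zeta_p]$ have coprime discriminants, which forces $\ZZ[\zeta_{n'}] \otimes_\ZZ \ZZ[\zeta_p] \cong \ZZ[\zeta_n]$ directly.
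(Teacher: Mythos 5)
Your proposal is correct, and it rests on the same central idea as the paper's proof: decompose $\ZZ[\zeta_n]$ as a tensor product of the rings $\ZZ[\zeta_{p_i}]$, and use the prime case ($1+\zeta_p+\cdots+\zeta_p^{p-1}=0$) to supply a $\ZZ$-basis of each factor. The organizational difference is that you iterate a two-factor tensor decomposition by induction on $d$ and prove that the multiplication map $\mu$ is an \emph{isomorphism}, whereas the paper writes down the full $d$-fold tensor product map $f$ at once and proves only that it is \emph{surjective}. That surjectivity is all the paper needs: since the elementary-tensor $\ZZ$-basis of the source (exponents $a_i\not\equiv 0\bmod p_i$) maps onto $P_n$ under $f$, the set $P_n$ spans $\ZZ[\zeta]$, and because $|P_n|=\phi(n)=\rank_\ZZ\ZZ[\zeta]$, a spanning set of the right size is automatically a basis. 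Your argument establishes injectivity of $\mu$ via torsion-freeness and linear disjointness of $\QQ(\zeta_{n'})$ and $\QQ(\zeta_p)$ over $\QQ$ (or, as a heavier backup, coprime discriminants of rings of integers); this is correct, but it is work that the cardinality count renders unnecessary. So what the paper's route buys is a slightly more elementary and shorter proof, avoiding any appeal to linear disjointness or to $\ZZ[\zeta_m]$ being the full ring of integers; what your route buys is the extra fact that $\ZZ[\zeta_{n'}]\otimes_\ZZ\ZZ[\zeta_p]\cong\ZZ[\zeta_n]$ as rings, which is pleasant but not used in the rest of the paper.
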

\begin{proof}
Note that $|P_n|=\phi(n)$ 
and that $\ZZ[\zeta]$ is a free abelian group of rank $\phi(n)$.
Therefore it suffices to show that $P_n$ is a
$\ZZ$-spanning set for $\ZZ[\zeta]$.

This spanning is clear when $n=p$ is prime since the only
non-primitive root of unity is 
$1=-\left( \zeta + \zeta^2 + \cdots + \zeta^{p-1} \right)$.

When $n=p_1\cdots p_d$ for $d > 1$, temporarily use the
notation $\zeta_n$ for a fixed primitive $n^{th}$ root of unity,
and similarly for $\zeta_{p_i}$.  The ring inclusions defined by
$$
\begin{aligned}
\ZZ[\zeta_{p_i}] & \overset{f_i}{\hookrightarrow} & \ZZ[\zeta_n] \\ 
\zeta_{p_i}      & \longmapsto & \left( \zeta_n \right)^{\frac{n}{p_i}}
\end{aligned}
$$
assemble to give a $\ZZ$-module map $f:=f_1 \otimes \cdots \otimes f_d$
\begin{equation}
\label{eqn:assembled-map}
\begin{array}{rcl}
\ZZ[\zeta_{p_1}] \otimes_\ZZ \cdots \otimes_\ZZ \ZZ[\zeta_{p_d}]
& \longrightarrow &\ZZ[\zeta_n] \\
& & \\
\zeta_{p_1}^{a_1} \otimes \cdots \otimes \zeta^{a_d}_{p_d}
& \longmapsto &\zeta_n^j \\
\end{array}
\end{equation}
where $j\equiv \sum_{i=1}^p a_i \frac{n}{p_i}\bmod{n}$.
Note that 
$$
\begin{aligned}
\frac{n}{p_i} & \equiv 0\bmod{p_{i_0}} \text{ for }i \neq i_0,\\
\frac{n}{p_{i_0}} &\not\equiv 0\bmod{p_{i_0}}.
\end{aligned}
$$
Consequently, for each $i_0=1,2,\ldots,d$, one
has $j \equiv a_{i_0} \frac{n}{p_{i_0}} \bmod{p_{i_0}}$,
and as $a_{i_0}$ runs over $\ZZ/p_{i_0}\ZZ$,
the product  $a_{i_0} \frac{n}{p_{i_0}}$ does the same.
The map $f$ is then surjective
by the Chinese Remainder Theorem isomorphism
\eqref{eqn:Chinese-remainder}.

Lastly, note that the source of $f$ has
a $\ZZ$-basis consisting of those 
$
\zeta_{p_1}^{a_1} \otimes \cdots \otimes \zeta^{a_d}_{p_d}
$
in which each $a_i \not\equiv 0\bmod{p_i}$.
Since this means that each  $a_i\frac{n}{p_i} \not\equiv 0\bmod{p_i}$,
this basis maps under $f$ to the set 
$P_n$. Hence $P_n$ must $\ZZ$-linearly span the target $\ZZ[\zeta]$,
and furthermore form a $\ZZ$-basis for the target.
\end{proof}

\section{Duality of matroids or Pl\"ucker coordinates}
\label{sec:duality}

We will need a version of the linear algebraic
duality between Pl\"ucker coordinates for
complementary Grassmannians $G(r,\FF^n), G(n-r,\FF^n)$,
or equivalently, the duality between bases and cobases
in coordinatized matroids.

\begin{proposition}
\label{prop:duality}
Let $0 \leq r \leq n$.
Let $M$ and $M^\perp$ be matrices in $\FF^{r \times n}$ and $\FF^{(n-r) \times n}$, 
respectively, both of maximal rank, with the following property: $\ker M$ is equal to
the row space of $M^{\perp}$, or equivalently, $\ker M^{\perp}$ 
is the row space of $M$.  Then
\begin{enumerate}
\item[(i)] there exists a scalar $\alpha$ in $\FF^\times$ having
the following property:  for every $(n-r)$-subset $T$ of $[n]$, with complementary set $T^c$,
$$
\det \left( M\big|_{T^c} \right) 
 = \pm \alpha \cdot \det \left( M^{\perp}\big|_{T} \right)
$$
where $A\big|_J$ denotes the restriction of a matrix $A$ to the
subset of columns indexed by $J$, and where the $\pm$ sign depends upon
the set $T$.
\item[(ii)] if one furthermore assumes that $\FF=\QQ$, that
$M$ and $M^\perp$ have entries in $\ZZ$, and that there exists at least one $(n-r)$-subset
$T_0$ for which $M\big|_{T_0^c}, M^{\perp} \big|_{T_0}$
are both invertible over $\ZZ$, then the scalar $\alpha$ above equals $\pm 1$,
and one has for every $(n-r)$-subset $T$,
$$
\coker \left( M\big|_{T^c} \right) 
\cong  
\coker \left( M^{\perp}\big|_{T} \right).
$$
Here we are regarding $M\big|_{T^c}$ and $M^{\perp}\big|_{T}$ as operators on 
$\ZZ^{r}$ and $\ZZ^{n-r}$, respectively.
\end{enumerate}
\end{proposition}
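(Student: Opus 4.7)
The plan is to reduce both $M$ and $M^\perp$ to a canonical form expressed in terms of a single $r\times(n-r)$ matrix $N$, and then verify the minor identities by Laplace expansion.

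First I would note that the orthogonality hypothesis on the kernels is equivalent to the single matrix identity $M(M^\perp)^T=0$. Since $M$ has rank $r$, there exists an $(n-r)$-subset $T_0$ of $[n]$ such that $M|_{T_0^c}$ is nonsingular, and I claim $M^\perp|_{T_0}$ is nonsingular as well: otherwise some nonzero $a\in\FF^{n-r}$ would satisfy $a^T M^\perp|_{T_0}=0$, so that $a^T M^\perp\in\ker M$ would be a nonzero vector supported on $T_0^c$, making the columns of $M|_{T_0^c}$ linearly dependent. Replacing $M$ by $(M|_{T_0^c})^{-1}M$ and $M^\perp$ by $(M^\perp|_{T_0})^{-1}M^\perp$ scales all maximal minors of each uniformly; after reordering so that $T_0^c=\{1,\ldots,r\}$, one then has $M=[\,I_r\mid N\,]$ for some $N\in\FF^{r\times(n-r)}$, and the relation $M(M^\perp)^T=0$ combined with $M^\perp|_{T_0}=I_{n-r}$ forces $M^\perp=[\,-N^T\mid I_{n-r}\,]$.

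For part (i), I would fix an $(n-r)$-subset $T$ and set $S:=T\cap T_0^c$ and $U^*:=T_0\setminus T$. Expanding $\det(M|_{T^c})$ along the standard-basis columns (those indexed by $T_0^c\setminus S$) reduces it, up to a sign depending on $T$, to the subdeterminant of $N$ on rows $S$ and columns $U^*$ (with indices shifted). Expanding $\det(M^\perp|_T)$ along the standard-basis columns in $I_{n-r}$ (those indexed by $T\cap T_0$) reduces it similarly, up to a sign and an extra factor of $(-1)^{|S|}$ from the $-N^T$ block, to the same subdeterminant of $N$. This yields the proportion claimed in (i), with $\alpha=\pm\det(M|_{T_0^c})/\det(M^\perp|_{T_0})$ once we undo the normalization.

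For part (ii), the $\ZZ$-invertibility hypothesis at $T_0$ gives $\det(M|_{T_0^c})=\pm1=\det(M^\perp|_{T_0})$, so $\alpha=\pm1$. For the cokernel statement I would first prove the following integrality lemma: under the $\ZZ$-invertibility hypothesis, $\ker M\subseteq\ZZ^n$ coincides with the integral row span of $M^\perp$. The inclusion $\supseteq$ is automatic; conversely, any $v\in\ker M\cap\ZZ^n$ equals $c^T M^\perp$ for some $c\in\QQ^{n-r}$, and restricting to $T_0$-coordinates gives $c^T=v|_{T_0}\cdot(M^\perp|_{T_0})^{-1}\in\ZZ^{n-r}$. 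Granting this lemma, and noting that $\ZZ$-invertibility of $M|_{T_0^c}$ makes $M\colon\ZZ^n\twoheadrightarrow\ZZ^r$ surjective, one has
$$
\coker(M|_{T^c})\;=\;\ZZ^r/M(\ZZ^{T^c})\;\cong\;\ZZ^n/\bigl(\ZZ^{T^c}+\ker M\bigr)\;=\;\ZZ^T/\mathrm{rowspan}_\ZZ(M^\perp|_T),
$$
where the last step comes from killing the $T^c$-coordinates. This last group is $\coker\bigl((M^\perp|_T)^T\bigr)\cong\coker(M^\perp|_T)$, the isomorphism being a consequence of the fact that a square integer matrix and its transpose share the same Smith normal form.

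The main obstacle I anticipate is the integrality lemma: over $\FF$ the equality of $\ker M$ with the row span of $M^\perp$ is given by hypothesis, but the corresponding equality as sublattices of $\ZZ^n$ can fail in general; the full strength of the part (ii) hypothesis (both $M|_{T_0^c}$ and $M^\perp|_{T_0}$ $\ZZ$-invertible for a \emph{common} $T_0$) is exactly what is needed to pin down the integer coefficients $c$ uniquely. Once the lemma is in hand, the rest is bookkeeping.
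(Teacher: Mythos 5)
Your proof of part (i) is essentially the paper's: both normalize (by row operations and column permutations) to the canonical pair $M=[\,I_r\mid N\,]$, $M^\perp=[\,-N^T\mid I_{n-r}\,]$ and then compare maximal minors by Laplace expansion. The only cosmetic difference is that you track the scalar $\alpha$ explicitly as $\pm\det(M|_{T_0^c})/\det(M^\perp|_{T_0})$, which the paper leaves implicit.

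Your proof of part (ii) is correct but takes a genuinely different route. The paper again passes to the normal form $M=[\,I_r\mid A\,]$, $M^\perp=[\,-A^t\mid I_{n-r}\,]$ (using the $\ZZ$-invertibility at $T_0$) and argues that, for a general $T$, pivoting on the identity columns reduces $\coker(M|_{T^c})$ and $\coker(M^\perp|_T)$ to the cokernels of sign-transposed submatrices of $A$, which then coincide by Smith normal form. (The paper's phrasing ``without loss of generality one can assume $T^c=[r]$'' conflates $T$ with $T_0$, so the write-up is a bit loose; the intended reduction is the pivoting argument just described.) You instead prove an \emph{integrality lemma} --- that under the $\ZZ$-invertibility hypothesis at $T_0$, the integer kernel of $M$ coincides with the integer row span of $M^\perp$ --- and then run the lattice-theoretic chain
$$
\coker(M|_{T^c})\cong \ZZ^n/(\ZZ^{T^c}+\ker_\ZZ M)\cong \ZZ^T/\mathrm{rowspan}_\ZZ(M^\perp|_T)=\coker\bigl((M^\perp|_T)^T\bigr)\cong\coker(M^\perp|_T),
$$
the first step using surjectivity of $M$ on $\ZZ^n$, the second using the integrality lemma and the direct sum $\ZZ^n=\ZZ^{T^c}\oplus\ZZ^T$, and the last the transpose-invariance of Smith normal form. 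Both proofs ultimately rest on $\coker B\cong\coker B^T$ for square integer $B$, but your route is coordinate-free and avoids the reduction-to-normal-form bookkeeping for each $T$; it also isolates the precise role of the common $T_0$ (it is exactly what pins down the integer coefficients in the lemma), which the paper's argument leaves implicit. Your argument is complete and correct.
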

\begin{proof}
For assertion (i), note that the hypotheses and conclusions
are unchanged if one performs row operations over $\FF$ separately on
$M, M^\perp$, and if one permutes simultaneously the columns of $M, M^\perp$
by the same permutation of $[n]$.  

Thus one can assume without loss of generality that the full rank matrix $M$ has the
form $M=[\,\, I_r \,\, | \,\, A \,\, ]$ for some
matrix $A$ in $\FF^{r \times (n-r)}$.  In this case, the kernel of $M$ is spanned
by the rows of $[\,\, -A^t \,\, | \,\, I_{n-r} \,\, ]$, and hence by performing row operations
on $M^\perp$, one can assume without loss of generality that 
$M^\perp = [ \,\, -A^t \,\, | \,\, I_{n-r} \,\, ]$.  

When $M, M^\perp$ have these special forms, one has that
$$
\begin{array}{ccccccc}
M\big|_{T^c} &= &[& \quad I_r\big|_{\,[r]\setminus T}  \quad 
               & | &  \quad A\big|_{\,[n] \setminus ([r] \cup T)} \quad &] \\
 & & & & & & \\
M^{\perp}\big|_{T} &= &[&\quad -A^t\big|_{\,[r] \cap T}  \quad 
               & | &  \quad I_{n-r}\big|_{T} \quad &].
\end{array} 
$$
Hence $\det \left( M\big|_{T^c} \right)$ will be, up to sign, the
determinant of 
$A$ restricted to 
its columns in $[n] \setminus ([r] \cup T)$,
and to its rows in $[r] \cap T$,
while $\det \left( M^\perp\big|_{T} \right)$ will be, up to sign,
the determinant of
$-A^t$ restricted to its columns in $[r] \cap T$,
and to its rows in $[n] \setminus ([r] \cup T)$.  
These determinants are the same up to sign.

For assertion (ii), again the hypotheses and
conclusion are unchanged if one performs row operations {\it invertible over $\ZZ$}
to $M, M^\perp$, and if one permutes columns simultaneously in $M, M^\perp$.
Thus without loss of generality, one can assume that 
$$
\begin{aligned}
T^c&=[r]\\
T&=[n] \setminus [r]\\
M&=[ \,\, I_r \,\, | \,\, A \,\, ]\\
M^\perp&=[ \,\, -A^t \,\, | \,\, I_{n-r} \,\, ]
\end{aligned}
$$
as above.  But in this case, one can check that 
$
\coker \left( M\big|_{T^c} \right)
\cong 
\coker \left( M^{\perp}\big|_{T} \right)
$ 
for the same reason that their determinants agree up to sign:  their
cokernels are isomorphic to the cokernels of matrices which
are, up to sign, the transposes of each other.
\end{proof}

The proof of Theorem~\ref{thm:sign-interpretation} will ultimately
rely on the following statement about duality of {\it oriented matroids}
for vectors over an ordered field $\FF$, such as
$\FF=\QQ$.

\begin{proposition}
\label{prop:oriented-matroid}
Let $\FF$ be an ordered field, $M$ and let
$M^\perp$ be matrices in $\FF^{r \times n}$ and $\FF^{(n-r) \times n}$
as in Proposition~\ref{prop:duality}, that is, both of maximal rank, with
$\ker M$ equal to the row space of $M^\perp$.  Let the vectors $v_\ell$ in  $\FF^r$
and $v^\perp_\ell$ in $\FF^{n-r}$ be the $\ell^{th}$ columns of $M$ and $M^\perp$. 

Let $A$ be an $(r+1)$-subset of $\{1,2,\ldots,n\}$ such that
the matrix $M\big|_A$ in $\FF^{r \times (r+1)}$ has full rank $r$,
with
\begin{equation}
\label{unique-v-dependence}
\sum_{\ell \in A} c_\ell v_\ell=0
\end{equation}
the unique dependence among its columns, up to scaling. 

Then for any pair of nonzero coefficients $c_j, c_{j'} \neq 0$, 
the matrix $M^\perp\big|_{A^c\cup\{j,j'\}}$ in $\FF^{(n-r) \times (n-r+1)}$ 
has full rank $n-r$, and the unique dependence among its columns, up to scaling,
\begin{equation}
\label{unique-v-perp-dependence}
\sum_{\ell \in A^c\cup\{j,j'\}} b_{\ell} v^\perp_\ell=0,
\end{equation}
will have both $b_j, b_{j'} \neq 0$, with
$$
\frac{c_j}{c_{j'}} = - \frac{b_{j'}}{b_{j}}.
$$
In particular, $c_j,c_{j'}$ have the same sign if and only if $b_j, b_{j'}$ have opposite
signs.
\end{proposition}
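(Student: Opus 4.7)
The plan is to exploit the fact that the row spaces of $M$ and $M^\perp$ are mutually orthogonal subspaces of $\FF^n$ under the standard inner product: the hypothesis gives $M \cdot (M^\perp)^t = 0$ and, equivalently, that $\ker M^\perp$ equals the row space of $M$. Whereas a direct calculation of $b_j/b_{j'}$ via Cramer's rule would require tracking signs through the column permutations hidden in Proposition~\ref{prop:duality}(i), the orthogonality trick hands us the sign $-1$ for free, leaving only the rank and non-vanishing claims to verify.

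First I would confirm that $M^\perp|_{A^c \cup \{j,j'\}}$ has full rank $n-r$, establishing existence and uniqueness (up to scale) of \eqref{unique-v-perp-dependence}. Since $c_j \neq 0$ in \eqref{unique-v-dependence}, Cramer's rule forces the $r \times r$ minor $\det(M|_{A \setminus \{j\}})$ to be nonzero. Proposition~\ref{prop:duality}(i), applied to the $(n-r)$-subset $T = A^c \cup \{j\}$ whose complement is $A \setminus \{j\}$, then gives $\det(M^\perp|_{A^c \cup \{j\}}) \neq 0$; analogously for $j'$. So $M^\perp|_{A^c \cup \{j,j'\}}$ contains an invertible $(n-r) \times (n-r)$ block, giving it full row rank. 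As a byproduct, $A^c \cup \{j\}$ is a basis in the matroid of $M^\perp$, so the vectors $\{v^\perp_\ell : \ell \in A^c\}$ are linearly independent.

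Next I would extract the key identity. Extend $(c_\ell)_{\ell \in A}$ to $\tilde{c} \in \FF^n$ by zeros off $A$; by \eqref{unique-v-dependence}, $\tilde{c} \in \ker M$, which is the row space of $M^\perp$. Similarly extend $(b_\ell)_{\ell \in A^c \cup \{j,j'\}}$ to $\tilde{b} \in \FF^n$ by zeros; by \eqref{unique-v-perp-dependence}, $\tilde{b} \in \ker M^\perp$, which is the row space of $M$. Orthogonality of these two row spaces forces $\langle \tilde{c}, \tilde{b} \rangle = 0$, and since the supports of $\tilde{c}$ and $\tilde{b}$ intersect precisely in $\{j, j'\}$, this collapses to
$$
c_j b_j + c_{j'} b_{j'} = 0.
$$

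Finally I would rule out $b_j = 0$: if so, the displayed identity combined with $c_{j'} \neq 0$ forces $b_{j'} = 0$, leaving a nontrivial dependence among the supposedly independent vectors $\{v^\perp_\ell : \ell \in A^c\}$. Hence $b_j, b_{j'} \neq 0$; dividing gives $c_j/c_{j'} = -b_{j'}/b_j$, and the sign claim follows since $\FF$ is ordered. The point one might have expected to be the main obstacle---pinning down the correct sign---turns out to be a gift of the orthogonality observation, and the real (but modest) technical work lies in the rank and nonvanishing bookkeeping of the first and last steps.
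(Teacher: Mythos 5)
Your proof is correct, and it is essentially the same argument as the paper's. The rank claim comes from Proposition~\ref{prop:duality}(i) applied to $T = A^c \cup \{j\}$ in both versions. Your orthogonality computation $\langle \tilde c, \tilde b\rangle = 0$ is exactly the paper's covector argument in different clothing: the paper extracts a functional $f$ from the row space of $M^\perp$ with $f(v^\perp_\ell) = c_\ell$ for $\ell\in A$ and $0$ on $A^c$, then evaluates it on the relation $\sum b_\ell v^\perp_\ell = 0$, which is precisely $\langle\tilde c,\tilde b\rangle$. The only cosmetic difference is in the nonvanishing step: the paper deduces $b_{j'}\neq 0$ directly from the basis structure of $M^\perp|_{A^c\cup\{j'\}}$, whereas you deduce both $b_j,b_{j'}\neq 0$ from the identity $c_jb_j+c_{j'}b_{j'}=0$ combined with the independence of $\{v^\perp_\ell: \ell\in A^c\}$; both are valid and of comparable length.
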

\begin{proof}
First let us show the assertion about ranks.  Since $M\big|_{A}$
has rank $r$, the fact that $c_{j'} \neq 0$ implies that $v_{j'}$ lies in
the span of the columns of $M\big|_{A\setminus\{j'\}}$.  Hence the 
matrix $M\big|_{A\setminus\{j'\}}$ in $\FF^{r \times r}$ has rank $r$, and its
columns give a basis for $\FF^r$.  Then Proposition~\ref{prop:duality}(i)
implies that the columns of $M^\perp\big|_{A^c \cup \{j'\}}$ form a
basis of  $\FF^{n-r}$, and thus $M^\perp\big|_{A^c \cup \{j,j'\}}$
has rank $n-r$.  This also shows that $v^\perp_{j'}$ is
dependent on the remaining columns of $M^\perp\big|_{A^c \cup \{j,j'\}}$,
so that \eqref{unique-v-perp-dependence} will at least have $b_{j'} \neq 0$.

Now a dependence \eqref{unique-v-dependence}
extends by zeroes to a 
vector $(c_1,\ldots,c_{r+1},0,\ldots,0)$ in $\FF^n$ that 
lies in $\ker(M)$, and hence also
lies in the row space of $M^\perp$.  However, vectors in the row space
of $M^\perp$ are {\it covectors} for $\{v_\ell^\perp\}$ in the sense that
they give the values of linear functionals $f$ in $(\FF^{n-r})^*$ when
applied to the list of vectors $(v_1^\perp, \ldots,v_n^\perp)$.  
Thus there is a functional $f$ having 
$
f(v^\perp_\ell)=c_\ell
$
for $\ell \in A$
and $f(v^\perp_\ell)=0$ for $\ell \in A^c$.
Applying this $f$ to \eqref{unique-v-perp-dependence} gives
$
c_j b_j +c_{j'} b_{j'}=0
$
which is equivalent to the remaining assertion of the 
proposition.
\end{proof}

\section{Simplicial spanning trees}
\label{sec:rooting}

For a collection of subsets $S$ of some vertex set $V$, let
$\langle S \rangle$ denote the (abstract) simplicial complex $S$ on $V$
generated by $S$, that is, $\langle S \rangle \subset 2^V$ consists of
all subsets of $V$ contained in at least one subset from $S$.
We recall the notion of a simplicial spanning tree in $S$,
following Adin \cite{Adin}, 
Duval, Klivans and Martin \cite{DuvalKlivansMartin}, Kalai \cite{Kalai}, and Maxwell \cite{Maxwell}.

\begin{definition} \rm \
\label{defn:spanning-tree} 
Let $S$ be the collection of facets of a pure $k$-dimensional (abstract) simplicial
complex.  Say that $R \subset S$ is an {\it $S$-spanning tree} if
\begin{enumerate}
\item[(i)] 
$\langle R \rangle$ contains the entire $(k-1)$-skeleton of
$\langle S \rangle$, 
\item[(ii)]
$
\tilde{H}_{k}(\langle R \rangle ;\ZZ)  = 0, and
$
\item[(iii)]
$
\tilde{H}_{k-1}(\langle R \rangle ;\ZZ)
\text{ is finite.}
$
\end{enumerate}
\end{definition}

We point out here three well-known features of this definition. 
\begin{proposition}
\label{three-tree-features}
Fix the collection of facets $S$ of a pure $k$-dimensional simplicial
complex.
\begin{enumerate}
\item[(i)]
Condition (i) in Definition~\ref{defn:spanning-tree} is equivalent to
$
\tilde{H}_{k}(\langle S \rangle , \langle R \rangle;\ZZ) = \ZZ^{|S \setminus R|}.
$
\item[(ii)] Condition (ii) in Definition~\ref{defn:spanning-tree} is
equivalent to 
$
\tilde{H}_{k}(\langle R \rangle ;\QQ)  = 0.
$
\item[(iii)] All $S$-spanning trees $R$  have the same cardinality, namely
\begin{equation}
\label{tree-size}
|R|=|S| - \rank_\ZZ \tilde{H}_k(\langle S \rangle ;\ZZ).
\end{equation}
\end{enumerate}
\end{proposition}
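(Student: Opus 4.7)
The plan is to handle parts (i), (ii), and (iii) in order, with part (iii) combining the first two via the long exact sequence of the pair $(\langle S\rangle,\langle R\rangle)$.

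For part (i), I would read off the relative cellular chain complex $C_\bullet(\langle S\rangle,\langle R\rangle;\ZZ)$ directly. Since $\langle S\rangle$ is $k$-dimensional, its only possibly nonzero relative chain groups sit in degrees at most $k$. When condition (i) of Definition~\ref{defn:spanning-tree} holds, $\langle S\rangle$ and $\langle R\rangle$ agree in all dimensions below $k$, so the relative complex reduces to the single term $C_k(\langle S\rangle,\langle R\rangle;\ZZ) \cong \ZZ^{|S\setminus R|}$ in degree $k$, which is therefore its own $\tilde{H}_k$. For the converse, if some $(k-1)$-simplex $\tau$ of $\langle S\rangle$ were missing from $\langle R\rangle$, then $\tau$ would be a face of some facet $\sigma\in S$; since every face of any facet of $R$ already lies in $\langle R\rangle$, we would need $\sigma\in S\setminus R$. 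Then $\partial\sigma$ would contain $\tau$ with nonzero coefficient in $C_{k-1}(\langle S\rangle,\langle R\rangle;\ZZ)$, so the relative boundary $\partial_k$ would be nonzero on the free group of rank $|S\setminus R|$, forcing $\tilde{H}_k(\langle S\rangle,\langle R\rangle;\ZZ) = \ker\partial_k$ to have strictly smaller rank.

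For part (ii), the universal coefficient theorem gives $\tilde{H}_k(\langle R\rangle;\QQ)\cong \tilde{H}_k(\langle R\rangle;\ZZ)\otimes_\ZZ\QQ$; since the top-dimensional group $\tilde{H}_k(\langle R\rangle;\ZZ) = \ker\partial_k$ sits inside the free abelian group $C_k(\langle R\rangle;\ZZ)$, it is torsion-free, and a torsion-free abelian group vanishes if and only if its rationalization does. For part (iii), I would feed the first two parts into the relevant segment of the long exact sequence of the pair,
\[
0 \to \tilde{H}_k(\langle R\rangle;\ZZ) \to \tilde{H}_k(\langle S\rangle;\ZZ) \to \tilde{H}_k(\langle S\rangle,\langle R\rangle;\ZZ) \to \tilde{H}_{k-1}(\langle R\rangle;\ZZ) \to \cdots,
\]
where the leading $0$ reflects $\langle S\rangle$ being $k$-dimensional. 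Conditions (ii) and (iii) on $R$ kill $\tilde{H}_k(\langle R\rangle;\ZZ)$ and make $\tilde{H}_{k-1}(\langle R\rangle;\ZZ)$ finite, while part (i) identifies $\tilde{H}_k(\langle S\rangle,\langle R\rangle;\ZZ)$ with $\ZZ^{|S\setminus R|}$. Taking ranks then forces $\rank\tilde{H}_k(\langle S\rangle;\ZZ)=|S\setminus R|$, which rearranges to $|R| = |S| - \rank\tilde{H}_k(\langle S\rangle;\ZZ)$, a quantity manifestly independent of the particular spanning tree $R$.

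I expect the only delicate step to be the converse direction of part (i), where the algebraic hypothesis on the rank of relative homology must be upgraded to the combinatorial conclusion that every $(k-1)$-simplex of $\langle S\rangle$ belongs to $\langle R\rangle$. The remaining work is essentially bookkeeping in a chain complex whose nonzero groups sit in a very short range of degrees.
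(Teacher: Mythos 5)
Your argument is correct and follows essentially the same route as the paper: part (i) hinges on the observation that the relative complex $(\langle S\rangle,\langle R\rangle)$ is concentrated in degree $k$ with free chain groups, so $\tilde H_k=\ker\partial_k$ has rank $|S\setminus R|$ exactly when $\partial_k=0$, which happens exactly when every $(k-1)$-face of $\langle S\rangle$ already lies in $\langle R\rangle$; part (ii) comes down to the fact that a subgroup of a free abelian group is torsion-free (you phrase this via universal coefficients, the paper via linear independence of facet boundaries over $\ZZ$ versus $\QQ$, but these are the same observation); and part (iii) is the identical long-exact-sequence rank count.
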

\begin{proof}
{\sf Proof of (i).} 
Note that
$
\tilde{H}_{k}(\langle S \rangle , \langle R \rangle;\ZZ)
=\tilde{Z}_{k}(\langle S \rangle , \langle R \rangle;\ZZ),
$
since $\langle S \rangle$ is $k$-dimensional.
By definition, the relative cycle group
$\tilde{Z}_{k}(\langle S \rangle , \langle R \rangle;\ZZ)$
equals the kernel of the map
$$
\partial_k: \tilde{C}_{k}(\langle S \rangle , \langle R \rangle;\ZZ)
\rightarrow  \tilde{C}_{k-1}(\langle S \rangle , \langle R \rangle;\ZZ).
$$
Because both the source and target of $\partial_k$ are free abelian, with the source of
rank $|S \setminus R|$, its kernel is free abelian of the same rank if and only
if $\partial_k$ is the zero map.  This last condition is equivalent to
Condition (i) in Definition~\ref{defn:spanning-tree}.

\vskip .1in
\noindent
{\sf Proof of (ii).}
This follows from the fact that $\langle R \rangle$ is $k$-dimensional, so that
$$
\begin{aligned}
\tilde{H}_{k}(\langle R \rangle;\ZZ)
&=\tilde{Z}_{k}(\langle R \rangle;\ZZ), \text{ and } \\
\tilde{H}_{k}(\langle R \rangle;\QQ)
&=\tilde{Z}_{k}(\langle R \rangle;\QQ).
\end{aligned}
$$
The cycle
group $\tilde{Z}_{k}(\langle R \rangle;\ZZ)$ (respectively, 
$\tilde{Z}_{k}(\langle R \rangle;\QQ)$) vanishes if and only if
the collection of boundaries of simplices in $R$ are linearly independent
over $\ZZ$ (respectively, over $\QQ$).  However, these two notions of
linear independence are equivalent.

\vskip .1in
\noindent
{\sf Proof of (iii).}
Consider this portion of the long exact sequence of the pair 
$(\langle S \rangle , \langle R \rangle)$
$$
\begin{array}{ccccccr}
\tilde{H}_{k}(\langle R \rangle;\ZZ)
&\rightarrow
&\tilde{H}_{k}(\langle S \rangle;\ZZ)
&\rightarrow
&\tilde{H}_{k}(\langle S \rangle , \langle R \rangle;\ZZ)
&\rightarrow
&\tilde{H}_{k-1}(\langle R \rangle;\ZZ) \\
\Vert & & & &\Vert& & \\
0 & & & & \ZZ^{|S \setminus R|} & &. \\
\end{array}
$$
Here the two vertical equalities come from Condition (ii) 
in Definition~\ref{defn:spanning-tree} and from assertion (i) above, respectively.
Since the last term $\tilde{H}_{k-1}(\langle R \rangle;\ZZ)$ in this sequence 
is a finite abelian group by Condition (iii) in Definition~\ref{defn:spanning-tree},
the sequence shows that $\tilde{H}_{k}(\langle S \rangle;\ZZ)$ is a subgroup
of $\ZZ^{|S \setminus R|}$ of finite index.  Consequently, it must be free abelian
of the same rank.  Hence 
$
|S \setminus R|=\rank_\ZZ \tilde{H}_k(\langle S \rangle ;\ZZ)
$
and equation \eqref{tree-size} follows.
\end{proof}

\noindent
The following observation essentially goes back
to work of Kalai \cite[Lemma 2]{Kalai}.

\begin{proposition}
\label{prop:Kalai}
Fix a vertex set $V$ and a collection of $k$-dimensional simplices $S$.
Consider a collection of $(k+1)$-dimensional faces $T$
of cardinality 
$$
|T|:=\rank_\ZZ \tilde{H}_k(\langle S \rangle ;\ZZ)
$$
for which $T \cup \langle S \rangle$ forms a simplicial complex $K$, 
that is, all boundaries of faces in $T$ lie in $\langle S \rangle$.

Then the following two assertions hold for any choice of an $S$-spanning tree $R$.
\begin{enumerate}
\item[(i)]
The $|T| \times |T|$ matrix $\partial$ that represents the
relative simplicial boundary map 
$$
\begin{array}{ccc}
C_{k+1}(K, \langle R \rangle; \ZZ) & \rightarrow & C_k(K, \langle R \rangle; \ZZ) \\
\Vert & & \Vert \\
\ZZ^{|T|} & & \ZZ^{|S \setminus R|}
\end{array}
$$
is nonsingular if and only if $\tilde{H}_{k+1}(K; \QQ)=0$.
\item[(ii)] When the matrix $\partial$  is nonsingular, then
$
\coker(\partial) = \tilde{H}_{k}(K, \langle R \rangle; \ZZ).
$
\end{enumerate}
\end{proposition}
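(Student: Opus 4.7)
The plan is to mimic the exact-sequence argument used in the proof of Proposition~\ref{three-tree-features}(iii), now applied to the pair $(K, \langle R \rangle)$, keeping careful track of rational versus integral behaviour of the relative boundary map. First I would check that $\partial$ really is a square matrix: since $\langle R \rangle$ has no $(k+1)$-faces, $C_{k+1}(K, \langle R \rangle; \ZZ) = C_{k+1}(K; \ZZ) = \ZZ^{|T|}$, and since $K$ and $\langle S \rangle$ share the same $k$-skeleton, $C_k(K, \langle R \rangle; \ZZ) = \ZZ^{|S \setminus R|}$, which by Proposition~\ref{three-tree-features}(iii) has rank $|T|$.

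For part (i), note that because $K$ has no $(k+2)$-dimensional faces, $\tilde{H}_{k+1}(K, \langle R \rangle; \ZZ) = \ker(\partial)$, and likewise over $\QQ$. Hence $\partial$ is nonsingular precisely when $\tilde{H}_{k+1}(K, \langle R \rangle; \QQ) = 0$. To trade this relative group for the absolute group $\tilde{H}_{k+1}(K; \QQ)$, I would invoke the long exact sequence of the pair $(K, \langle R \rangle)$: the group $\tilde{H}_{k+1}(\langle R \rangle; \QQ)$ vanishes because $\langle R \rangle$ is $k$-dimensional, and the group $\tilde{H}_{k}(\langle R \rangle; \QQ)$ vanishes because condition (iii) of Definition~\ref{defn:spanning-tree} makes $\tilde{H}_k(\langle R \rangle; \ZZ)$ finite. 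The sequence therefore collapses to an isomorphism $\tilde{H}_{k+1}(K; \QQ) \cong \tilde{H}_{k+1}(K, \langle R \rangle; \QQ)$, which yields (i).

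For part (ii), I would use that condition (i) of Definition~\ref{defn:spanning-tree}, as unpacked in the proof of Proposition~\ref{three-tree-features}(i), says precisely that the relative boundary $\partial_k : C_k(\langle S \rangle, \langle R \rangle; \ZZ) \to C_{k-1}(\langle S \rangle, \langle R \rangle; \ZZ)$ is the zero map; since $K$ and $\langle S \rangle$ have the same $k$- and $(k-1)$-skeleta, the analogous relative map for the pair $(K, \langle R \rangle)$ is also zero. Assuming $\partial$ is nonsingular, one then computes
$$
\tilde{H}_k(K, \langle R \rangle; \ZZ) \;=\; \ker(\partial_k)/\im(\partial) \;=\; C_k(K, \langle R \rangle; \ZZ)/\im(\partial) \;=\; \coker(\partial),
$$
which is (ii).

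The main obstacle is not really a mathematical one so much as a bookkeeping one: one must be careful that ``$\partial$ nonsingular'' is consistently interpreted as injectivity after tensoring with $\QQ$ (equivalently, nonzero determinant, equivalently finite cokernel), so that the integer-versus-rational flip in part (i) lines up cleanly with the long exact sequence, while the integral statement in part (ii) legitimately uses the full vanishing of $\partial_k$ over $\ZZ$.
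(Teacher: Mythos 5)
Your argument follows essentially the same route as the paper's: for part (i), pass between $\tilde{H}_{k+1}(K;\QQ)$ and $\tilde{H}_{k+1}(K,\langle R\rangle;\QQ)$ via the long exact sequence of the pair, then identify the latter with $\ker(\partial)\otimes\QQ$ because $K$ has no $(k+2)$-faces; for part (ii), observe that the degree-$k$ relative chain group already equals the relative cycle group, so the cokernel of $\partial$ is the relative homology. Your preliminary check that $\partial$ really is a square matrix of size $|T|=|S\setminus R|$ is a nice addition.

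There is, however, one miscitation that should be fixed. To make the long exact sequence collapse you need $\tilde H_k(\langle R\rangle;\QQ)=0$, and you justify this by saying ``condition (iii) of Definition~\ref{defn:spanning-tree} makes $\tilde H_k(\langle R\rangle;\ZZ)$ finite.'' Condition (iii) actually asserts finiteness of $\tilde H_{k-1}(\langle R\rangle;\ZZ)$, not $\tilde H_k$. The statement you want is supplied by condition (ii), which says $\tilde H_k(\langle R\rangle;\ZZ)=0$ outright (equivalently, Proposition~\ref{three-tree-features}(ii), which gives $\tilde H_k(\langle R\rangle;\QQ)=0$ directly and is what the paper cites). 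Because condition (ii) is strictly stronger than the finiteness you claimed, your conclusion still stands once the reference is corrected; this is a slip in the citation rather than a gap in the argument.
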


\begin{proof}
{\sf Proof of (i).} Start by noting that
$
\tilde{H}_k(\langle R \rangle;\QQ) =0
$
due to part (ii) of Proposition~\ref{three-tree-features}.
Since $\langle R \rangle$ is only $k$-dimensional, it also
has 
$
\tilde{H}_{k+1}(\langle R \rangle;\QQ) = 0.
$
Consequently, 
$$
\tilde{H}_{k+1}(K;\QQ)  \cong  \tilde{H}_{k+1}(K,\langle R \rangle;\QQ)= Z_{k+1}(K,\langle R \rangle;\QQ).
$$
where the isomorphism comes from
the long exact sequence in homology 
for the pair $(K,\langle R \rangle)$, and the equality
follows since $K$ is $(k+1)$-dimensional.
Therefore $\tilde{H}_{k+1}(K;\QQ)$ vanishes if and only if
$Z_{k+1}(K,\langle R \rangle;\QQ)$ vanishes, which occurs if and only
the square matrix $\partial$ has vanishing kernel, i.e. it is nonsingular.

\vskip .1in
\noindent
{\sf Proof of (ii).}
We first note that
$
C_{k}(K,\langle R \rangle;\ZZ) 
= Z_{k}(K,\langle R \rangle;\ZZ)
$
due to these facts:
\begin{enumerate}
\item[(a)] every $k$-simplex in $K$ actually lies in $S$ by our assumption
on $T$, and
\item[(b)] every boundary of a $k$-simplex in $S$ lies in $R$ by Condition (i) in
Definition~\ref{defn:spanning-tree}. 
\end{enumerate}
Thus when $\partial$ is nonsingular, one has
$$
\begin{aligned}
\coker(\partial)  
 & = C_{k}(K,\langle R \rangle;\ZZ) / B_k(K,\langle R \rangle;\ZZ) \\
 & = Z_{k}(K,\langle R \rangle;\ZZ) / B_k(K,\langle R \rangle;\ZZ) 
 = \tilde{H}_{k}(K,\langle R \rangle;\ZZ). 
\end{aligned}
$$
\end{proof}

\begin{definition} \rm \
Given a collection of $k$-simplices $S$, and an $S$-spanning tree $R$,
say\footnote{This condition on an $S$-spanning tree also plays
an important role in forthcoming work by 
Duval, Klivans and Martin \cite{DuvalKlivansMartin-personal-communication}.} 
that $R$ is {\it torsion-free} 
if Condition (iii) in Definition~\ref{defn:spanning-tree}
is strengthened to the vanishing condition 
\begin{enumerate}
\item[(iv)]
$\tilde{H}_{k-1}(\langle R \rangle ;\ZZ) = 0.$
\end{enumerate}
\end{definition}

\begin{example}
\label{ex:bouquet-example} 
\rm
For example, when $\langle R \rangle$ is a contractible subcomplex of $\langle S \rangle$
then it satisfies Condition (ii) of Definition~\ref{defn:spanning-tree}
as well as the vanishing condition (iv).  If it furthermore satisfies
Condition (i) of Definition~\ref{defn:spanning-tree}, then $R$ becomes a
torsion-free $S$-spanning tree.

A frequent combinatorial setting where this occurs (such as in Proposition~\ref{prop:shellability} 
below) is when $S$ is the set of facets of a (pure) {\it shellable} \cite{Bjorner}
simplicial complex, and $R$ is the subset of 
facets which are not fully attached along their entire boundaries
during the shelling process.
\end{example}

\begin{proposition}
\label{rooting-key-observation}
Using the hypotheses and notation of Proposition~\ref{prop:Kalai},
if one assumes in addition that $R$ is torsion-free, 
assertion (ii) of Proposition~\ref{prop:Kalai} becomes the following assertion
about (non-relative) homology:
\begin{enumerate}
\item[(ii)]
When the matrix $\partial$  is nonsingular, then
$
\coker(\partial) = \tilde{H}_{k}(K ; \ZZ)
$
\end{enumerate}
\end{proposition}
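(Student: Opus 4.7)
The plan is to derive this from Proposition \ref{prop:Kalai}(ii), which already gives $\coker(\partial) \cong \tilde{H}_k(K, \langle R \rangle; \ZZ)$, by showing that under the torsion-free hypothesis the relative homology agrees with the absolute homology $\tilde{H}_k(K; \ZZ)$. The natural tool is the long exact sequence of the pair $(K, \langle R \rangle)$ in the relevant degrees.

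Concretely, I would write down the fragment
$$
\tilde{H}_k(\langle R \rangle; \ZZ) \to \tilde{H}_k(K; \ZZ) \to \tilde{H}_k(K, \langle R \rangle; \ZZ) \to \tilde{H}_{k-1}(\langle R \rangle; \ZZ)
$$
and observe that both of the outer groups vanish. The left one vanishes by Condition (ii) in Definition~\ref{defn:spanning-tree} (which is part of the hypothesis that $R$ is an $S$-spanning tree), and the right one vanishes by the strengthened Condition (iv) which defines torsion-freeness. Exactness then gives an isomorphism $\tilde{H}_k(K; \ZZ) \cong \tilde{H}_k(K, \langle R \rangle; \ZZ)$, and composing with the isomorphism supplied by Proposition~\ref{prop:Kalai}(ii) yields $\coker(\partial) \cong \tilde{H}_k(K; \ZZ)$.

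There is essentially no obstacle here; the content is entirely in the earlier propositions. The only point that deserves a line of justification is that the hypotheses of Proposition~\ref{prop:Kalai} are still in force (so assertion (ii) of that proposition is applicable), and that the two cited vanishing statements use the integer coefficients we actually need, rather than their rational versions. Once these are noted, the two-term exact sequence gives the result in one line.
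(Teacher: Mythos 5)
Your argument is exactly the paper's: the paper's one-line proof invokes the long exact sequence of the pair $(K,\langle R\rangle)$ to get $\tilde{H}_k(K;\ZZ)\cong\tilde{H}_k(K,\langle R\rangle;\ZZ)$, which is precisely the fragment you wrote down with the two outer terms killed by Condition (ii) and Condition (iv). Correct, and the same approach.
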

\begin{proof}
When $R$ is torsion-free, the long exact sequence for the pair $(K,\langle R \rangle)$
shows that $\tilde{H}_{k}(K ; \ZZ) \cong \tilde{H}_{k}(K , \langle R \rangle; \ZZ).$
\end{proof}

\section{More on the complete $d$-partite complex}
\label{sec:d-partite}

It is well-known and easy to see
that for a positive 
integer $n$ having prime factorization
$n=p_1^{e_1} \cdots p_d^{e_d}$ with $e_i \geq 1$, one  has 
$\Phi_n(x) = \Phi_{p_1 \cdots p_d}(
x^{n/p_1\cdots p_d})$.  Thus it suffices to interpret
the coefficients of cyclotomic polynomials for squarefree $n$.

In this section, we fix such a squarefree
$n=p_1 \cdots p_d$, and discuss further properties of the 
simplicial complexes $K_{p_1,\ldots, p_d}$, defined
in Section~\ref{sec:introduction}, appearing in Theorems~\ref{thm:second-interpretation} 
and \ref{thm:sign-interpretation}.

\begin{proposition}
\label{prop:shellability}
The $(d-2)$-dimensional skeleton of $K_{p_1,\ldots,p_d}$ is shellable, with 
$$
\tilde{H}_{d-2}(K_{p_1,\ldots,p_d};\ZZ) = \ZZ^{n - \phi(n)}.
$$
\end{proposition}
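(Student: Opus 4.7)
The plan is to handle shellability and the homology calculation separately. For shellability, each factor $K_{p_i}$ is a $0$-dimensional complex on $p_i$ vertices, which is vacuously shellable under any linear ordering of its vertices. A theorem of Bj\"orner (provable by a straightforward induction on the number of factors from the two-factor case) says that the simplicial join of shellable complexes is shellable, so $K_{p_1,\ldots,p_d}$ is pure shellable of dimension $d-1$. Bj\"orner's theorem that the pure $k$-skeleton of a shellable complex is shellable then delivers shellability of the $(d-2)$-skeleton $K := (K_{p_1,\ldots,p_d})^{(d-2)}$.

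Shellability of $K$ has two immediate payoffs that I would invoke next: its reduced homology is concentrated in top dimension $d-2$, and $\tilde{H}_{d-2}(K;\ZZ)$ is free abelian. So the only thing left is to pin down the rank.

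For the rank, I would use the long exact sequence of the pair $(L, K)$ where $L := K_{p_1,\ldots,p_d}$. Writing it out,
$$
\tilde{H}_{d-1}(K;\ZZ) \to \tilde{H}_{d-1}(L;\ZZ) \to \tilde{H}_{d-1}(L,K;\ZZ) \to \tilde{H}_{d-2}(K;\ZZ) \to \tilde{H}_{d-2}(L;\ZZ),
$$
the first term vanishes by dimension, the last vanishes because $L$ itself is shellable (hence Cohen--Macaulay, with homology concentrated in top dimension $d-1$), and the relative term is $\tilde{H}_{d-1}(L,K;\ZZ) \cong \ZZ^{f_{d-1}(L)} = \ZZ^n$, freely generated by the facets of $L$. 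The key input then is that
$$
\tilde{H}_{d-1}(L;\ZZ) \;\cong\; \bigotimes_{i=1}^d \tilde{H}_0(K_{p_i};\ZZ) \;\cong\; \bigotimes_{i=1}^d \ZZ^{p_i-1} \;\cong\; \ZZ^{\phi(n)},
$$
by the K\"unneth-type formula for joins (or equivalently, by iterating $A * B \simeq \Sigma(A \wedge B)$ on discrete sets, identifying $L$ up to homotopy with a wedge of $\phi(n)$ spheres $S^{d-1}$). The exact sequence then becomes $0 \to \ZZ^{\phi(n)} \to \ZZ^n \to \tilde{H}_{d-2}(K;\ZZ) \to 0$, which, combined with freeness from Part~1, yields $\tilde{H}_{d-2}(K;\ZZ) \cong \ZZ^{n-\phi(n)}$.

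The main obstacle is only the bookkeeping in identifying $\tilde{H}_{d-1}(L;\ZZ) \cong \ZZ^{\phi(n)}$; everything else is a direct application of cited results. If the K\"unneth-for-joins invocation feels like overkill, an alternative is to compute the reduced Euler characteristic directly: the join formula $\tilde{\chi}(A*B) = -\tilde{\chi}(A)\tilde{\chi}(B)$ iterated over $d$ discrete factors gives $\tilde{\chi}(L) = (-1)^{d-1}\phi(n)$, and then $\tilde{\chi}(K) = \tilde{\chi}(L) - (-1)^{d-1}n = (-1)^{d-2}(n-\phi(n))$, which matches the rank since $K$ has reduced homology only in dimension $d-2$.
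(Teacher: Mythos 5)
Your shellability argument is identical to the paper's: zero-dimensional complexes are trivially shellable, joins of shellable complexes are shellable, and skeleta of pure shellable complexes are shellable. Where you diverge is in computing the rank of $\tilde{H}_{d-2}$. The paper, having established that a shellable complex has free homology concentrated in top degree, simply computes the reduced Euler characteristic of the $(d-2)$-skeleton directly by summing $(-1)^i f_i$ over faces, yielding $|\phi(n)-n|$ via inclusion-exclusion. Your primary route instead runs the long exact sequence of the pair $(L,K)$ and invokes the K\"unneth-for-joins identification $\tilde{H}_{d-1}(L) \cong \bigotimes_i \tilde{H}_0(K_{p_i}) \cong \ZZ^{\phi(n)}$ to obtain the short exact sequence $0 \to \ZZ^{\phi(n)} \to \ZZ^n \to \tilde{H}_{d-2}(K) \to 0$. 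This is a valid, genuinely different derivation of the rank: it buys you an explicit description of $\tilde{H}_{d-2}(K)$ as a cokernel, at the cost of importing the join homology formula, whereas the paper's Euler-characteristic count is more self-contained and elementary. Note though that your LES argument still needs the freeness furnished by shellability to pass from "rank $n-\phi(n)$" to the group isomorphism $\ZZ^{n-\phi(n)}$ — you do say this, so the argument is complete. The Euler-characteristic alternative you sketch at the end (using $\tilde{\chi}(A*B) = -\tilde{\chi}(A)\tilde{\chi}(B)$ iteratively) lands you at the paper's exact computation, just organized multiplicatively rather than by face-counting, and is the cleanest of the three.
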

\begin{proof}
To show that the $(d-2)$-skeleton is shellable, we note the following three facts: (i) zero-dimensional complexes are all trivially shellable, (ii) joins of shellable complexes are shellable \cite[Sec. 2]{ProvanBillera}, 
and 
(iii) skeleta of (pure) shellable simplicial complexes are shellable
\cite[Corollary 10.12]{BjornerWachs}. 
Having shown that this skeleton is shellable, it therefore has only
top homology; see, for example \cite[Appendix]{Bjorner}.  This homology is free abelian, of rank
equal to the absolute value of its reduced Euler characteristic, namely
$$
\begin{aligned}
\left|\sum_{i \geq -1} (-1)^i \rank_\ZZ (C_i)\right|
 &= \left|\sum_{i \geq -1} (-1)^i \sum_{\stackrel{I \subsetneq \{1,2,\ldots, d\}}{|I| = i+1}} \prod_{i \in I} p_i \right|  \\
&= \left|\sum_{I \subsetneq \{1,2,\ldots,d\}} (-1)^{|I|-1} \prod_{i \in I} p_i\right| \\
& = \left| (p_1 - 1) \cdots (p_d - 1) - p_1 \cdots p_d \right| \\
&= |\phi(n) - n|. 
\end{aligned}
$$
\end{proof}

As noted in the introduction, the Chinese Remainder Theorem isomorphism \eqref{eqn:Chinese-remainder}
identifies elements of $\ZZ/n\ZZ$ with the $(d-1)$-dimensional simplices
of $K_{p_1,\ldots,p_d}$.  Lower dimensional faces of $K_{p_1,\ldots, p_d}$ can also be identified as cosets of subgroups within $\ZZ/n \ZZ$, but we will use this identification sparingly in this paper.  For the sake of writing down oriented simplicial boundary maps,
choose the following orientation on the simplices of $K_{p_1,\ldots,p_d}$,
consistent with the orientation of facets preceding Theorem~\ref{thm:sign-interpretation}:
choose the oriented $(\ell-1)$-simplex 
$
[j_{i_1}\bmod{p_{i_1}}, \,\, \ldots, \,\, j_{i_\ell}\bmod{p_{i_\ell}}]
$
with $i_1 < \ldots < i_\ell$ 
as a basis element of $C_{\ell-1}(K_{p_1,\ldots, p_d};\ZZ)$.
The following simple observation was the crux of the results in \cite{MartinR}.

\begin{proposition}
\label{prop:cyclotomic-and-simplicial-are-dual}
If one identifies the indexing set $\ZZ/n\ZZ$ for the columns of the boundary map
\begin{equation}
\label{eqn:top-boundary-map}
C_{d-1}(K_{p_1,\ldots,p_d};\ZZ)
\rightarrow 
C_{d-2}(K_{p_1,\ldots,p_d};\ZZ)
\end{equation}
with the set $\mu_n:=\{ \zeta^j \}_{j \in \ZZ/n\ZZ}$ of all $n^{th}$ roots of unity,
then every row of this boundary map represents a $\QQ$-linear dependence on $\mu_n$.
\end{proposition}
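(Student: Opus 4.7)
The plan is to compute each row of the boundary map \eqref{eqn:top-boundary-map} explicitly, then observe that its support, under Chinese Remainder, is a coset of a cyclic subgroup of $\ZZ/n\ZZ$ on which the corresponding roots of unity sum to zero.

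First I would fix a row, which is indexed by some $(d-2)$-dimensional simplex $\sigma$ of $K_{p_1,\ldots,p_d}$. Such a $\sigma$ is obtained by choosing an index $i_0\in\{1,\ldots,d\}$ to omit, together with a residue $j_i\bmod p_i$ for each $i\ne i_0$; in the chosen orientation it has the form
$$\sigma=[j_1\bmod p_1,\ldots,\widehat{j_{i_0}\bmod p_{i_0}},\ldots,j_d\bmod p_d].$$
The facets $F_j$ of $K_{p_1,\ldots,p_d}$ whose boundary contains $\sigma$ are exactly the $p_{i_0}$ facets agreeing with $\sigma$ in every coordinate except the $i_0$-th, and by the standard simplicial boundary formula applied to the oriented facet \eqref{eqn:oriented-facet}, each such $F_j$ contributes $(-1)^{i_0-1}$ to this row, while all other columns contribute $0$.

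Next, I would translate the support of this row from sequences $(j_1\bmod p_1,\ldots,j_d\bmod p_d)$ into $\ZZ/n\ZZ$ via the Chinese Remainder isomorphism $\Xi$ from \eqref{eqn:Chinese-remainder}. Because every coordinate other than $i_0$ is fixed while the $i_0$-th coordinate ranges over all of $\ZZ/p_{i_0}\ZZ$, the set of column labels becomes a coset $j^{*}+(n/p_{i_0})\ZZ/n\ZZ$ inside $\ZZ/n\ZZ$, i.e., a single arithmetic progression of length $p_{i_0}$ with common difference $n/p_{i_0}$.

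Finally, identifying the column indexed by $j$ with $\zeta^{j}\in\mu_n$, the linear combination asserted by the row is
$$(-1)^{i_0-1}\sum_{k=0}^{p_{i_0}-1}\zeta^{\,j^{*}+k\,n/p_{i_0}}\;=\;(-1)^{i_0-1}\,\zeta^{j^{*}}\sum_{k=0}^{p_{i_0}-1}\omega^{k},$$
where $\omega:=\zeta^{n/p_{i_0}}$ is a primitive $p_{i_0}$-th root of unity; since $p_{i_0}>1$, this geometric sum vanishes, yielding the asserted $\QQ$-linear (indeed $\ZZ$-linear) dependence on $\mu_n$. There is no real obstacle here beyond sign and coset bookkeeping: the content of the proposition is simply the observation that the $d$ ``flavors'' of rows in the top boundary map recover precisely the standard relations $1+\omega+\cdots+\omega^{p-1}=0$ for $p\in\{p_1,\ldots,p_d\}$, translated by various powers of $\zeta$.
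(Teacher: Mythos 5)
Your proposal is correct and follows essentially the same route as the paper's proof: identify each row with a $(d-2)$-face missing one coordinate $i_0$, observe that the nonzero entries all equal $(-1)^{i_0-1}$ and sit on a coset of the cyclic subgroup generated by $n/p_{i_0}$ under the Chinese Remainder identification, and note that the corresponding roots of unity sum to zero. You simply spell out the final vanishing as a geometric series, which the paper leaves as a one-line remark.
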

\begin{proof}
A row in this boundary map is indexed by an oriented $(d-2)$-face, which has 
the form
$
[j_1 \bmod p_1,\ldots, \widehat{j_k \bmod p_k},\ldots, j_d \bmod p_d] 
$
for some $j_k \in \{0, 1,\ldots, p_k-1\}$ and $1\leq k\leq d$.  
This row will then contain all zeroes except for entries equal to $(-1)^{k-1}$
in the columns indexed by $\zeta^j$ where 
$$
\begin{aligned}
j & \equiv j_1\bmod p_1,\\
  & \vdots \\
j & \equiv j_d\bmod p_d
\end{aligned}
$$
except that $j \bmod p_k$ is allowed to be arbitrary.  
These exponents $j$ are exactly those lying in one coset
of the subgroup $p_1\cdots \hat{p_k} \cdots p_d\ZZ/n\ZZ$ within $\ZZ/n\ZZ$.
Summing $\zeta^j$ over $j$ in such a coset gives zero.
\end{proof}

\begin{example} \rm
Let $n = 15$ as in Example \ref{ex:n15}, and consider the matrix for the simplicial boundary map $C_1(K_{3,5}; \ZZ) \to C_0(K_{3,5}; \ZZ)$.  One of its rows is indexed by the $0$-face $[2 \bmod 5]$ and this row has exactly three nonzero entries, all equal to $(-1)^0 = +1$.  To see these signs, we rewrite $[2 \bmod 5]$ in three ways, all of which involve deleting the first entry out of two in an oriented $1$-face: 
$$[2 \bmod 5] = [\widehat{0 \bmod 3}, 2 \bmod 5] = [\widehat{1 \bmod 3}, 2 \bmod 5]  = [\widehat{2 \bmod 3}, 2 \bmod 5].$$
The columns corresponding to these three $1$-faces are indexed by the roots of unity $\zeta^{12}$, $\zeta^7$, and $\zeta^2$, respectively.  Summing these up with coefficients of positive one, we get 
$$1\cdot\zeta^{12} + 1\cdot\zeta^7 + 1\cdot\zeta^2 = \zeta^2(\zeta^{10} + \zeta^5 + 1),$$ which is the sum of $\zeta^j$ over $j$ lying in a coset of $5\ZZ / 15\ZZ$, and hence is zero. 
\end{example}

\begin{definition} \rm
\label{defn:KT}
Assume that $n$ is squarefree and let $T$ denote any set of $n-\phi(n)$ columns of the boundary
map \eqref{eqn:top-boundary-map}.  Identify the
complementary set $T^c$ of $\phi(n)$ columns with a subset
of the $n^{th}$ roots-of-unity $\mu_n$.  
Create a subcomplex of $K_{p_1,\ldots,p_d}$ by including
its entire $(d-2)$-skeleton and attaching the subset of
$(d-1)$-faces indexed by $T$.  We denote this subcomplex as $K[T]$.
\end{definition}

Recall from the introduction that for any subset 
$A \subseteq \{0,1,\ldots,\phi(n)\} \subset \ZZ/n\ZZ$,
we let $K_A$ denote the subcomplex of $K_{p_1,\ldots,p_d}$ generated
by the facets $\{ F_{j\bmod{n}} \}$ as $j$ runs through the set of residues
$A \sqcup A_0$ 
where
$$
A_0:= \{\phi(n)+1, \,\, \phi(n)+2, \,\, \ldots, \,\, n-2, \,\, n-1\}.
$$

\begin{proposition}
\label{full-skeleton-prop}
The subcomplex $K_\varnothing$, and hence every subcomplex $K_A$, contains
the full $(d-2)$-skeleton of $K_{p_1,\ldots,p_d}$.   Consequently,
$K_A = K[A \sqcup A_0]$.
\end{proposition}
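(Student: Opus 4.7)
The plan is to prove the first assertion---that $K_\varnothing$ contains the entire $(d-2)$-skeleton of $K_{p_1,\ldots,p_d}$---from which the identification $K_A = K[A \sqcup A_0]$ is automatic: the subfaces of the facets $\{F_j : j \in A_0\}$ already generate the full $(d-2)$-skeleton, so enlarging $A_0$ to $A \sqcup A_0$ produces precisely that skeleton together with the adjoined facets $\{F_j : j \in A \sqcup A_0\}$, matching the definition of $K[A \sqcup A_0]$.

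For the main claim, I would pick an arbitrary $(d-2)$-face
$$
\sigma = [j_1 \bmod p_1, \, \ldots, \, \widehat{j_k \bmod p_k}, \, \ldots, \, j_d \bmod p_d]
$$
and exhibit an $\ell \in A_0$ with $\sigma \subseteq F_{\ell \bmod n}$. By the Chinese Remainder Theorem isomorphism \eqref{eqn:Chinese-remainder}, the residues $\ell$ satisfying $\ell \equiv j_i \bmod p_i$ for every $i \neq k$ form a single coset of the subgroup $(n/p_k)\ZZ/n\ZZ$ of order $p_k$. Taking representatives in $\{0, 1, \ldots, n-1\}$, this coset appears as an arithmetic progression of length $p_k$ and common difference $n/p_k$, so its maximum term $\ell_{\max}$ satisfies $\ell_{\max} \geq (p_k-1)(n/p_k) = n - n/p_k$.

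It then suffices to verify $n - n/p_k > \phi(n)$, equivalently $n - \phi(n) > n/p_k$, so that $\ell_{\max}$ lands in $A_0 = \{\phi(n)+1, \ldots, n-1\}$ and the facet $F_{\ell_{\max}} \in K_\varnothing$ contains $\sigma$. Dividing through by $n$ and using $\phi(n)/n = \prod_{i=1}^d (1 - 1/p_i)$, this inequality rearranges, after factoring out $(1 - 1/p_k)$, to
$$
\prod_{i \neq k}\left(1 - \tfrac{1}{p_i}\right) < 1,
$$
which is immediate whenever $d \geq 2$, since each factor lies strictly between $0$ and $1$. The degenerate case $d = 1$ is vacuous, as the $(-1)$-skeleton consists of the empty face, which lies in every simplicial complex. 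This arithmetic inequality is the only genuine piece of content to verify, and it poses essentially no obstacle.
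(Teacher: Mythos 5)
Your proof is correct and takes essentially the same approach as the paper: both observe that a $(d-2)$-face corresponds to a coset of $\tfrac{n}{p_k}\ZZ/n\ZZ$ inside $\ZZ/n\ZZ$, and that some member of that coset must land in $A_0=\{\phi(n)+1,\ldots,n-1\}$, which reduces to the inequality $n-\tfrac{n}{p_k} > \phi(n)$, i.e.\ $\prod_{i\neq k}(1-1/p_i)<1$. The paper argues slightly more abstractly that $A_0$ is a consecutive run of residues long enough to hit any such coset, whereas you exhibit the maximum representative $\ell_{\max}\geq n-\tfrac{n}{p_k}$ directly; your explicit version has the minor advantage of sidestepping the paper's off-by-one in the count of $|A_0|$ (which is $n-\phi(n)-1$, not $n-\phi(n)$, though the strict inequality the calculation actually yields makes the argument go through in either case).
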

\begin{proof}
Since a $(d-2)$-face of $K_{p_1,\ldots,p_d}$ corresponds to a coset
$j_0 + \frac{n}{p_i} \left( \ZZ/n\ZZ \right)$ within $\ZZ/n\ZZ$ for some $i=1,2,\ldots,d$ and
$j_0 \in \ZZ/n\ZZ$, one must check that
every such coset intersects $A_0$.  
As $A_0$ is a consecutive sequence of $n-\phi(n)$ residues, this amounts
to checking that 
$$
n-\phi(n) \geq \frac{n}{p_i}
\quad \text{ for } \quad i=1,2,\ldots,d,
$$
or equivalently, that
$$
n\left( 1  - \frac{1}{p_i} \right) \geq \phi(n)
= n \left( 1- \frac{1}{p_1} \right) \cdots
\left( 1- \frac{1}{p_d} \right).
$$
This inequality holds since each factor in parenthesis on the right is
less than $1$.
\end{proof}

We next point out an interesting feature of the labelling of the 
boundary map \eqref{eqn:top-boundary-map} with regard to
the set $P_n$ of primitive $n^{th}$ roots of unity, noted already in \cite[Remark 5]{MartinR}.  Let $P_n^c$ denote the $(n-\phi(n))$-element 
subset of $\mu_n$ indexed by the $n^{th}$ roots of unity which are not primitive.

\begin{proposition}
\label{prop:contractible-tree}
Let $n$ be a squarefree integer and $P_n^c$ be as above.  Then the subcomplex $K[{P_n}^c]$ of $K_{p_1,\ldots, p_d}$ is contractible.
\end{proposition}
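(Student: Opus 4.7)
The plan is to realize $K[P_n^c]$ as a union of closed stars of carefully chosen vertices and then invoke the Nerve Theorem for subcomplex covers. For each $i=1,\ldots,d$ let $v_i$ denote the vertex $0\bmod p_i$ of $K_{p_i}$, and let $\str(v_i)$ denote the closed star of $v_i$ in $K_{p_1,\ldots,p_d}$, that is, the subcomplex consisting of all simplices $\sigma$ with $\sigma\cup\{v_i\}\in K_{p_1,\ldots,p_d}$. Being a cone with apex $v_i$, each $\str(v_i)$ is contractible.

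The first step is to verify the identification
$$
K[P_n^c] \;=\; \bigcup_{i=1}^{d}\str(v_i).
$$
Under the Chinese Remainder Theorem, a facet $F_j$ contains $v_i$ if and only if $j\equiv 0\bmod p_i$, so the facets appearing on the right are precisely those $F_j$ with $\gcd(j,n)>1$, i.e., those with $\zeta^j\in P_n^c$. Any simplex of dimension at most $d-2$ omits at least one of the parts $K_{p_i}$ and hence lies in $\str(v_i)$ for any such omitted index; this accounts for the entire $(d-2)$-skeleton required in $K[P_n^c]$ by Definition~\ref{defn:KT} and Proposition~\ref{full-skeleton-prop}.

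The second step is to check that every non-empty intersection of cover members is contractible. For non-empty $I\subseteq\{1,\ldots,d\}$, set $W_I:=\{v_i:i\in I\}$, which is an $(|I|-1)$-simplex of $K_{p_1,\ldots,p_d}$ since its vertices lie in distinct parts. Using that $K_{p_1,\ldots,p_d}$ is a simplicial join, a short unwinding of the definitions yields $\bigcap_{i\in I}\str(v_i)=\str(W_I)$, the closed star of the simplex $W_I$, which is a cone with apex any chosen $v_i\in W_I$ and therefore contractible; in particular this intersection is non-empty.

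The final step is to invoke the Nerve Theorem for closed covers of a simplicial complex by subcomplexes: since every non-empty intersection in the cover $\{\str(v_i)\}_{i=1}^{d}$ is contractible, $K[P_n^c]$ is homotopy equivalent to the nerve of the cover. Because every non-empty subset $I$ contributes a simplex, the nerve is the full $(d-1)$-simplex on $\{1,\ldots,d\}$, which is contractible, proving the proposition. The only step requiring any genuine care is the star-cover identification of Step~1, but this reduces to an elementary combinatorial check about join complexes, so I do not anticipate any serious obstacle.
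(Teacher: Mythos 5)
Your proof is correct and takes essentially the same approach as the paper: identify $K[P_n^c]$ as the union of closed stars of the vertices $0 \bmod p_i$, observe that all nonempty intersections are stars of faces and hence contractible, and conclude by the nerve lemma. The paper's proof is this argument verbatim (it also remarks that an induction on $d$ would work, but does not carry it out).
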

\begin{proof}
Observe that the primitive roots in $\ZZ/ n \ZZ$ are exactly those elements which do not vanish modulo $p_i$ for $i=1,\ldots, d$.  Tracing through the labelling of the $(d-1)$-faces via $\Xi$,  we obtain the description
$$
K[{P_n}^c] = \bigcup_{i=1}^d \str_{K_{p_1,\ldots,p_d}}(0 \bmod p_i), 
$$
where $\str_\Delta(v)$ denotes the {\it simplicial star} of the
vertex $v$ inside a simplicial complex $\Delta$.
Furthermore, each intersection of these stars is nonempty and
contractible, because it is the
star of another face: for $I \subset [d]$,
$$
\bigcap_{i \in I} \str_{K_{p_1,\ldots,p_d}} (0 \bmod p_i) 
\quad = \quad
\str_{K_{p_1,\ldots,p_d}} \left( \{0 \bmod p_i\}_{i \in I} \right).
$$
A standard nerve lemma \cite[Theorem 10.6]{BjornerTopMeth} then shows that
$K[{P_n}^c]$ itself is contractible.  This also follows by induction on $d$, where 
the case $d=2$ appears as \cite[Exercise 0.23, p. 20]{Hatcher}.
\end{proof}

\begin{theorem}
\label{thm:all-together}
Let $n$ be a squarefree integer and $T$ be a subset of $\mu_n$ of size $n - \phi(n)$.  Let $K[T]$ be the subcomplex of $K_{p_1,\ldots, p_d}$ of Definition~\ref{defn:KT}.  Then
$$
\tilde{H}_i(K[T];\ZZ) \cong 
\begin{cases}
 \ZZ[\zeta] / \ZZ T^c & \text{ if }i=d-2,\\
 \ZZ & \text{ if }i=d-1\text{ and }\rank_\ZZ(\ZZ T^c) = \phi(n)-1,\\
 0 & \text{ if }i=d-1\text{ and }\rank_\ZZ(\ZZ T^c) = \phi(n),\\
 0 & \text{ if }i<d-2.
\end{cases}
$$
where $\ZZ T^c$ is the sublattice $\ZZ$-spanned by the 
roots-of-unity $T^c \subset \mu_n$.
\end{theorem}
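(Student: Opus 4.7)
The plan is to combine Kalai's relative-cokernel formula with the matroid/Pl\"ucker duality of Proposition~\ref{prop:duality}(ii). Let $S$ denote the set of $(d-2)$-faces of $K_{p_1,\ldots,p_d}$, which by Proposition~\ref{prop:shellability} span a pure shellable $(d-2)$-skeleton with $\tilde H_{d-2}(\langle S\rangle;\ZZ)\cong\ZZ^{n-\phi(n)}$ and $\tilde H_i(\langle S\rangle;\ZZ)=0$ for $i<d-2$. Pick, as in Example~\ref{ex:bouquet-example}, a torsion-free $S$-spanning tree $R$ arising from the shelling, so that $\langle R\rangle$ is contractible. Since $K[T]$ contains the full $(d-2)$-skeleton by construction, $\tilde H_i(K[T];\ZZ)=0$ for $i<d-2$ at once, and the long exact sequence of the pair $(K[T],\langle R\rangle)$ collapses (since $\tilde H_*(\langle R\rangle;\ZZ)=0$) to give $\tilde H_i(K[T];\ZZ)\cong\tilde H_i(K[T],\langle R\rangle;\ZZ)$ in all degrees. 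The relative chain complex has only two nontrivial groups, $C^{\mathrm{rel}}_{d-1}\cong\ZZ^T$ and $C^{\mathrm{rel}}_{d-2}\cong\ZZ^{S\setminus R}$, both of rank $n-\phi(n)$, whence
$$
\tilde H_{d-1}(K[T];\ZZ)=\ker\partial_T^R, \qquad \tilde H_{d-2}(K[T];\ZZ)=\coker\partial_T^R,
$$
where $\partial_T^R:\ZZ^T\to\ZZ^{S\setminus R}$ is the square relative boundary matrix.

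To identify $\coker\partial_T^R$ with $\ZZ[\zeta]/\ZZ T^c$, I apply Proposition~\ref{prop:duality}(ii) with $M\in\ZZ^{\phi(n)\times n}$ the matrix expressing $\{\zeta^j\}_{j\in\ZZ/n\ZZ}$ in a $\ZZ$-basis of $\ZZ[\zeta]$ (so $\coker(M|_{T^c})=\ZZ[\zeta]/\ZZ T^c$), and $M^\perp\in\ZZ^{(n-\phi(n))\times n}$ the matrix of $\partial_{d-1}\colon C_{d-1}(K_{p_1,\ldots,p_d})\to C_{d-2}/C_{d-2}(\langle R\rangle)$ (so $M^\perp|_T=\partial_T^R$). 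Proposition~\ref{prop:cyclotomic-and-simplicial-are-dual} says each row of $M^\perp$ lies in $\ker M$, and a rank count---using the iterated-join computation $\tilde H_{d-1}(K_{p_1,\ldots,p_d};\QQ)\cong\QQ^{\phi(n)}$ and $Z_{d-2}(\langle R\rangle;\QQ)=0$---forces $\rank M^\perp=n-\phi(n)=\dim\ker M$, so the row space of $M^\perp$ equals $\ker M$ exactly. The hypothesis of Proposition~\ref{prop:duality}(ii) requires a single $T_0$ with both $M|_{T_0^c}$ and $M^\perp|_{T_0}$ invertible over $\ZZ$; take $T_0=P_n^c$. Then $M|_{P_n}$ has the primitive $n$-th roots as columns, a $\ZZ$-basis by Proposition~\ref{prop:Johnsen-precision}, while Proposition~\ref{prop:contractible-tree} makes $K[P_n^c]$ contractible, so $M^\perp|_{P_n^c}=\partial_{P_n^c}^R$ has trivial kernel and cokernel. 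Proposition~\ref{prop:duality}(ii) then yields, for every size-$(n-\phi(n))$ subset $T$,
$$
\tilde H_{d-2}(K[T];\ZZ)=\coker(M^\perp|_T)\ \cong\ \coker(M|_{T^c})=\ZZ[\zeta]/\ZZ T^c.
$$

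For $\tilde H_{d-1}(K[T];\ZZ)=\ker\partial_T^R$, this kernel is a submodule of the free group $\ZZ^T$, hence free, of rank equal to the $\QQ$-corank of $M^\perp|_T$. Standard matroid duality for the column matroid of $M$ expresses this corank as $\phi(n)-\rank_\ZZ(\ZZ T^c)$, giving $\tilde H_{d-1}(K[T];\ZZ)\cong\ZZ^{\phi(n)-\rank_\ZZ(\ZZ T^c)}$, which specializes to $\ZZ$ and $0$ in the two cases listed in the theorem. The main obstacle I foresee is verifying the duality setup cleanly---in particular, that $M^\perp$ has row space exactly equal to $\ker M$ (and not merely contained in it), and that $T_0=P_n^c$ simultaneously produces unimodular submatrices on both sides---since this is the step that intertwines the number-theoretic input (Johnsen's basis theorem) with the topological input (contractibility of $K[P_n^c]$).
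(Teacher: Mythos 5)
Your proof is correct and follows essentially the same route as the paper: construct $M$ from expressing $\mu_n$ in a $\ZZ$-basis of $\ZZ[\zeta]$, construct $M^\perp$ as the top boundary map restricted to rows $S\setminus R$ for a torsion-free $S$-spanning tree $R$ arising from the shelling, verify the hypotheses of Proposition~\ref{prop:duality}(ii) via $T_0=P_n^c$ using Proposition~\ref{prop:Johnsen-precision} and Proposition~\ref{prop:contractible-tree}, and identify the homology groups of $K[T]$ with the kernel and cokernel of $M^\perp|_T$ via the long exact sequence of the pair $(K[T],\langle R\rangle)$. The only stylistic differences are that you inline the argument of Proposition~\ref{rooting-key-observation} directly rather than citing it, and you add a matroid-duality rank count to make the $\tilde H_{d-1}$ cases explicit (in fact your argument yields $\tilde H_{d-1}(K[T];\ZZ)\cong\ZZ^{\phi(n)-\rank_\ZZ(\ZZ T^c)}$ uniformly, slightly more than the theorem asserts; cf.\ Remark~\ref{Condro-remark}). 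One small economy you could make: the separate K\"unneth-style rank count showing $\rank M^\perp=n-\phi(n)$ is redundant once you invoke contractibility of $K[P_n^c]$, since invertibility of $M^\perp|_{P_n^c}$ over $\ZZ$ already forces $M^\perp$ to have full rank.
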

\begin{proof}
Choose any $\ZZ$-basis for $\ZZ[\zeta]$.  Let $M$ in
$\ZZ^{\phi(n) \times n}$ be the matrix that 
expresses the $n^{th}$ roots of unity $\mu_n$ in this basis.  

We construct a particular matrix $M^\perp$ to accompany $M$
as in Proposition~\ref{prop:duality} part (ii).
Consider the collection $S$ of all $(d-2)$-faces in the complete
$d$-partite complex $K_{p_1,\ldots,p_d}$.  The complex $\langle S \rangle$ 
generated by $S$ is therefore the $(d-2)$-skeleton of $K_{p_1,\ldots,p_d}$.
Proposition~\ref{prop:shellability} implies that $\langle S \rangle$ is shellable,
and that it has $\rank_\ZZ \tilde{H}_{d-2}(\langle S \rangle;\ZZ)=n - \phi(n)$.
Therefore, we are in the situation of Example \ref{ex:bouquet-example}, implying that there exists a torsion-free $S$-spanning tree $R$,
and any such $R$ will have 
$| S \setminus R |= n-\phi(n)$.

Our candidate for the matrix $M^\perp$ in $\ZZ^{(n-\phi(n)) \times n}$ is the
restriction of the boundary map from \eqref{eqn:top-boundary-map} 
to its rows indexed by $S \setminus R$.  
Proposition~\ref{prop:cyclotomic-and-simplicial-are-dual}
shows that the rows of $M^\perp$ are all perpendicular
to the rows of $M$.  

Now choose $T_0$ so that 
$T_0^c$ indexes the set $P_n$ of primitive $n^{th}$ roots of unity. 
Proposition~\ref{prop:Johnsen-precision} 
implies that the maximal minor $M\big|_{T_0^c}$ of $M$ 
is invertible over $\ZZ$, while   
Proposition~\ref{prop:contractible-tree}
implies that the maximal
minor $M^\perp\big|_{T_0}$ of $M^\perp$ 
is invertible over $\ZZ$.  Thus $M, M^\perp$ satisfy the hypotheses of
Proposition~\ref{prop:duality} part (ii), 
and combining this with Proposition~\ref{rooting-key-observation}
gives the assertion of the theorem.
\end{proof}

\begin{remark}
\label{Condro-remark}
  \rm 
  Note that Theorem~\ref{thm:all-together} makes no assertion about
  $\tilde{H}_{d-1}(K[T];\ZZ)$ when $\rank_\ZZ(\ZZ T^c) \leq \phi(n)-2$, as
  these cases will not be needed in the proofs of Theorem~\ref{thm:second-interpretation} and
  \ref{thm:sign-interpretation}.  The authors thank Giacomo Condr\`o for pointing out
  that this result was inadvertently misstated in previous versions of this paper,
  including the journal version.
\end{remark}

\section{Proof of Theorems~\ref{thm:second-interpretation}
and ~\ref{thm:sign-interpretation}}
\label{sec:all-together}

We are now in a position to prove Theorems \ref{thm:second-interpretation} and \ref{thm:sign-interpretation}.

\vskip.1in
\begin{proof}[Proof of Theorem~\ref{thm:second-interpretation}]
Let  
$T^c = \{1,\zeta,\zeta^2, \ldots, \zeta^{\phi(n)} \} \setminus \{\zeta^j\}$,
so that
$T=A_0 \cup \{j\}$, and $K[T]=K[A_0 \cup \{j\}]=K_{\{j\}}$ 
by Proposition~\ref{full-skeleton-prop}.
Note that $\rank_\ZZ(\ZZ T^c)$ is either $\phi(n)$ or $\phi(n)-1$, since
$T^c=\left( \{1,\zeta,\zeta^2, \ldots, \zeta^{\phi(n-1)} \} \cup \{ \zeta^{\phi(n)} \} \right) \setminus \{\zeta^j\}$,
and
$$
\rank_\ZZ\left( \ZZ\{1,\zeta,\zeta^2,\ldots,\zeta^{\phi(n)-1}\}\right)=\rank_\ZZ[\ZZ[\zeta]]=\phi(n).
$$
The theorem then follows from Theorem~\ref{thm:all-together} and 
Corollary~\ref{cor:first-interpretation}.
\end{proof}

\vskip.1in
\begin{proof}[Proof of Theorem~\ref{thm:sign-interpretation}]
We prove Theorem \ref{thm:sign-interpretation} by applying Proposition~\ref{prop:oriented-matroid} to the matrices $M, M^\perp$ in the proof of 
Theorem~\ref{thm:all-together}, with $A = \{1,\zeta,\zeta^2, \ldots, \zeta^{\phi(n)} \}$.  Thus $A^c =A_0$ and $K_{\{j,j'\}}=K[A^c \cup \{j,j'\}]$ by  Proposition~\ref{full-skeleton-prop}.

The dependence \eqref{unique-v-dependence} among the columns of $M|_A$
has the same coefficients (up to scaling) as the cyclotomic polynomial, and 
the dependence \eqref{unique-v-perp-dependence} among the columns of
$M^\perp\big|_{A^c\cup\{j,j'\}}$ 
will have the same coefficients (up to scaling)
as a nonzero cycle $z=\sum_{\ell} b_\ell [F_\ell]$ in $\tilde{H}_{d-1}(K_{\{j,j'\}};\ZZ)$.
\end{proof}

\section{Attaching map reformulation}
\label{sec:attachment}

The authors are indebted to Dmitry Fuchs for suggesting 
reformulations of 
Theorems~\ref{thm:second-interpretation} and \ref{thm:sign-interpretation},
explaining how the complete $d$-partite complex $K_{p_1,\ldots,p_d}$ 
is built from its subcomplex $K_\varnothing$, by attaching the facets
$F_{j \bmod{n}}$ along their boundaries.
We briefly discuss this here, beginning 
with a homological version. Throughout this section,
all homology groups are reduced, and taken with coefficients in $\ZZ$.

Recall from the Introduction that for $j$ in $\ZZ/n\ZZ$, the facet $F_{j \bmod{n}}$ was
given a particular orientation $[F_{j \bmod{n}}]$ as a basis element
in the oriented reduced $(d-1)$-chains of $K_{p_1,\ldots,p_d}$.
Let $[z_{j \bmod{n}}]:= \partial [F_{j \bmod{n}}]$ denote the $(d-2)$-cycle
which is its image under the simplicial boundary map $\partial$.

Letting $\SSS^d, \BBB^d$ denote the $d$-dimensional sphere and ball
respectively, denote by 
$\SSS^{d-1} \cup_{f} \BBB^d$ 
the space obtained by attaching $\BBB^d$ to $\SSS^{d-1}$ 
along its boundary $\Bd(\BBB^d)$ via a map $\Bd(\BBB^d) \overset{f}{\rightarrow} \SSS^{d-1}$.
Recall (see, e.g. \cite[pp. 12-13, \S 2.2, and Cor. 4.25]{Hatcher}) 
that the homotopy type of $\SSS^{d-1} \cup_{f} \BBB^d$ 
is determined by the absolute value of
$\deg(f)$, the scalar defined by the
map on the top homology groups
$$
\tilde{H}_{d-1}(\Bd(\BBB^d)) \cong \ZZ  
\overset{f_*}{\longrightarrow}
\ZZ \cong \tilde{H}_{d-1}(\SSS^d).
$$

\begin{theorem}
\label{prop:homology-attaching}
Let $n=p_1 \cdots p_d$ be squarefree.
\begin{enumerate}
\item[(i)] One has a homology isomorphism
$$
\tilde{H}_*(K_\varnothing) \cong \tilde{H}_*( \SSS^{d-2}),
$$
with $\tilde{H}_{d-2}(K_\varnothing) \cong \ZZ$ generated by
the cycle $[z_{\phi(n) \bmod{n}}]$.
\item[(ii)]
If $\Phi_n(x) = \sum_{j=0}^{\phi(n)} c_j x^j$, then
for $j=0,1,\ldots,\phi(n)$, one has
$$
[z_{j \bmod{n}}] = c_j [z_{\phi(n) \bmod{n}}]
\quad \text{ in } \quad \tilde{H}_{d-2}(K_\varnothing) \cong \ZZ,
$$
and a homology isomorphism
$$
\tilde{H}_*(K_{\{j\}}) \cong \tilde{H}_*( \BBB^{d-1} \cup_{f_j} \SSS^{d-2})
$$
where $\deg(f_j)=c_j$.
\end{enumerate}
\end{theorem}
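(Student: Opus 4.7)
The plan is to deduce both parts from the long exact sequences of the pairs $(K_{\{j\}}, K_\varnothing)$, coupled with Theorems~\ref{thm:second-interpretation} and \ref{thm:sign-interpretation}.

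For part (i), I would first apply the long exact sequence of $(K_{\{\phi(n)\}}, K_\varnothing)$. Since $\Phi_n$ is monic one has $c_{\phi(n)} = 1$, so Theorem~\ref{thm:second-interpretation} yields $\tilde{H}_*(K_{\{\phi(n)\}};\ZZ) = 0$. The relative simplicial chain complex introduces a single $(d-1)$-cell $[F_{\phi(n)\bmod n}]$ and nothing else, so $\tilde{H}_i(K_{\{\phi(n)\}}, K_\varnothing;\ZZ)$ equals $\ZZ$ for $i=d-1$ and vanishes otherwise. The long exact sequence then forces $\tilde{H}_i(K_\varnothing;\ZZ) = 0$ for $i \neq d-2$ and $\tilde{H}_{d-2}(K_\varnothing;\ZZ) \cong \ZZ$, with generator the image of $[F_{\phi(n)\bmod n}]$ under the connecting homomorphism, namely $[z_{\phi(n)\bmod n}]$.

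For the first assertion of part (ii), I would use the same mechanism for a general $j$: the connecting map $\partial: \ZZ \to \ZZ$ of the long exact sequence for $(K_{\{j\}}, K_\varnothing)$ sends $[F_{j\bmod n}]$ to $[z_{j\bmod n}]$. When $c_j \neq 0$, Theorem~\ref{thm:second-interpretation} gives $\tilde{H}_{d-1}(K_{\{j\}};\ZZ) = 0$ and $\tilde{H}_{d-2}(K_{\{j\}};\ZZ) = \ZZ/c_j\ZZ$, so the long exact sequence forces $\partial$ injective with cokernel $\ZZ/c_j\ZZ$, i.e.\ multiplication by $\pm c_j$. To pin down the sign, apply Theorem~\ref{thm:sign-interpretation} to the pair $(j, \phi(n))$: a generator $z = \sum_\ell b_\ell [F_\ell]$ of $\tilde{H}_{d-1}(K_{\{j,\phi(n)\}};\ZZ) \cong \ZZ$ satisfies $c_j/c_{\phi(n)} = -b_{\phi(n)}/b_j$, and passing the chain identity $\partial z = 0$ into $\tilde{H}_{d-2}(K_\varnothing;\ZZ)$ (where every $[z_{\ell\bmod n}]$ for $\ell \in A_0$ is the boundary of $[F_\ell]$) yields $b_j [z_{j\bmod n}] + b_{\phi(n)} [z_{\phi(n)\bmod n}] = 0$; combined with $c_{\phi(n)} = 1$ this gives $[z_{j\bmod n}] = c_j [z_{\phi(n)\bmod n}]$. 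When instead $c_j = 0$, Theorem~\ref{thm:second-interpretation} gives $\tilde{H}_{d-1}(K_{\{j\}};\ZZ) = \tilde{H}_{d-2}(K_{\{j\}};\ZZ) = \ZZ$, and the portion $0 \to \ZZ \to \ZZ \xrightarrow{\partial} \ZZ \to \ZZ \to 0$ of the long exact sequence forces $\partial = 0$ (since its kernel must contain the image of an injection $\ZZ \hookrightarrow \ZZ$), giving $[z_{j\bmod n}] = 0 = c_j [z_{\phi(n)\bmod n}]$.

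For the second assertion of part (ii), observe that $K_{\{j\}}$ is built from $K_\varnothing$ by attaching the single $(d-1)$-simplex $F_j$ along its $(d-2)$-sphere boundary; by the first assertion, the attaching map $f_j$ induces multiplication by $c_j$ on $\tilde{H}_{d-2}$, so $\deg f_j = c_j$. A direct cellular chain comparison then identifies $\tilde{H}_*(K_{\{j\}};\ZZ)$ with $\tilde{H}_*(\SSS^{d-2} \cup_{f_j} \BBB^{d-1})$ for any map $f_j$ of that degree. The main obstacle throughout is the sign determination in the $c_j \neq 0$ case: Theorem~\ref{thm:second-interpretation} alone only fixes $|c_j|$, so matching $\deg f_j$ to $c_j$ \emph{with its sign} requires coupling the long exact sequence with the oriented-matroid input of Theorem~\ref{thm:sign-interpretation}.
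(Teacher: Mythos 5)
Your proof is correct and takes essentially the same approach as the paper: both hinge on the $\ZZ$-acyclicity of $K_{\{\phi(n)\}}$ (Theorem~\ref{thm:second-interpretation} with $c_{\phi(n)}=1$), the shared $(d-2)$-skeleton from Proposition~\ref{full-skeleton-prop}, and Theorem~\ref{thm:sign-interpretation} applied with $j'=\phi(n)$ to fix the sign. Where the paper observes directly that the boundary cycles $[z_{\ell \bmod n}]$ for $\ell\geq\phi(n)$ form a $\ZZ$-basis of the $(d-2)$-cycle lattice and then tracks the effect of swapping $[z_{\phi(n)}]$ for $[z_j]$, you repackage the identical information via the long exact sequences of the pairs $(K_{\{\phi(n)\}},K_\varnothing)$ and $(K_{\{j\}},K_\varnothing)$ --- equivalent bookkeeping, with the advantage that your version spells out the $c_j=0$ case explicitly rather than leaving it to the reader.
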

\begin{proof}
Proposition~\ref{full-skeleton-prop} shows that
all of the spaces $K_A$ share the same $(d-2)$-skeleton as
$K_{p_1,\ldots,p_d}$, and hence they share the same homology
groups $\tilde{H}_i$ for $i < d-2$.
Furthermore, this $(d-2)$-skeleton was shown in
Proposition~\ref{prop:shellability}
to be shellable, with top cycle/homology group 
$\tilde{Z}_{d-2} \cong \ZZ^{n- \phi(n)}$.
Thus the $i$-dimensional
homology groups with $i < d-2$ for any $K_A$ will vanish,
in agreement with the homology of $\SSS^{d-1}$ and  $\SSS^{d-1} \cup_{f_j} \BBB^d$.

It only remains to show the various assertions within
(i) and (ii) for $(d-1)$- and $(d-2)$-homology.
Note that the complex $K_{\{ \phi(n) \}}$ is $\ZZ$-acyclic,
by Theorem~\ref{thm:second-interpretation} and the fact that $c_{\phi(n)}=+1$
since $\Phi_n(x)$ is monic of degree $\phi(n)$.  
Since $K_{\{ \phi(n) \}}$ also has exactly $n-\phi(n)$ facets $\{F_{j \bmod{n}}\}_{j=\phi(n)}^{n-1}$,
their boundary cycles $[z_{j \bmod{n}}]$ must form a 
$\ZZ$-basis for the 
$(d-2)$-cycle lattice 
$\tilde{Z}_{d-2} \cong \ZZ^{n- \phi(n)}$.
Since the subcomplex $K_\varnothing$ of $K_{\{\phi(n)\}}$
has the same $(d-2)$-skeleton and contains all of its facets except for
$F_{\phi(n) \bmod{n}}$, the assertions of (i)
follow.

By assertion (i), for any $j=0,1,\ldots,\phi(n)-1$, there will be
a unique integer $c$ for which 
$
[z_{j \bmod{n}}] = c [z_{\phi(n) \bmod{n}}]
$
in $\tilde{H}_{d-2}(K_\varnothing) \cong \ZZ$.
This is equivalent to the assertion that
there is a $(d-1)$-cycle in $K_{\{j,\phi(n)\}}$ of the form
$$
[F_{j \bmod{n}}] - c [F_{\phi(n) \bmod{n}}]
+\sum_{\ell=\phi(n)+1}^{n-1} b_\ell [F_{\ell \bmod{n}}].
$$
Taking $j'=\phi(n)$ in 
Theorem~\ref{thm:sign-interpretation}, and
bearing in mind that $c_{\phi(n)}=+1$,
this forces $c=c_j$, as asserted in (ii).

Lastly, note that $K_{\{j\}}$ shares the same $(d-2)$-skeleton as
$K_{\{\phi(n)\}}$, and shares most of the same facets, except for replacing
the facet $F_{\phi(n) \bmod{n}}$ with $F_{j \bmod{n}}$.  This means that
the $(d-2)$-boundaries of $K_{\{j\}}$ will span 
the sublattice of the $(d-2)$-cycle lattice $\tilde{Z}_{d-2} \cong \ZZ^{n- \phi(n)}$
in which the $\ZZ$-basis element
$
[z_{\phi(n) \bmod{n}}]
$
is replaced by
$
[z_{j \bmod{n}}] = c_j [z_{\phi(n) \bmod{n}}].
$
Thus the $(d-1)$-homology of $K_{\{j\}}$ still vanishes,
and its $(d-2)$-homology is the quotient lattice $\ZZ/c_j\ZZ$.
This agrees with $\tilde{H}_i( \BBB^{d-1} \cup_{f_j} \SSS^{d-2})$ for
$i=d-1, d-2$.
\end{proof}

Note that Theorem~\ref{prop:homology-attaching} does not
{\it circumvent} Theorems~\ref{thm:second-interpretation} and
Theorems~\ref{thm:sign-interpretation}.  It uses both
in a crucial way, thereby relying ultimately on the matroid
duality inherent in the proofs of these results.

\begin{remark} \rm
D. Fuchs also suggested the following further reformulation
of the main results.  

\begin{proposition}
Define a $(d-1)$-cochain $b$ on the complete $d$-partite
complex $K_{p_1,\ldots,p_d}$ whose value on $[F_{j \bmod{n}}]$ 
is $c_j$ for $j=0,1,\ldots,\phi(n)$, and 
$0$ otherwise. Then $b$ is a coboundary.
\end{proposition}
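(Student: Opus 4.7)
The plan is to show that $b$ vanishes on every integer $(d-1)$-cycle of $K := K_{p_1,\ldots,p_d}$ and then invoke the Universal Coefficient Theorem.  Because $K$ is $(d-1)$-dimensional, $C^d(K;\ZZ)=0$, so $b$ is automatically a cocycle.  The first step will be to observe that $K$, being the simplicial join of the discrete vertex sets $K_{p_i}$, is homotopy equivalent to a wedge of $\phi(n)$ copies of $\SSS^{d-1}$, so $\tilde{H}_{d-2}(K;\ZZ)=0$.  Universal Coefficients will then give
$$
\tilde{H}^{d-1}(K;\ZZ) \;\cong\; \mathrm{Hom}\bigl(\tilde{H}_{d-1}(K;\ZZ),\,\ZZ\bigr),
$$
reducing the statement to showing $\langle b,z\rangle = 0$ for every $z$ in the integer cycle group $Z_{d-1}(K;\ZZ)$.

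To check this vanishing, I will identify $b$ with the integer vector $(c_0,c_1,\ldots,c_{\phi(n)},0,\ldots,0) \in \ZZ^n$ via the labelling of facets by $\ZZ/n\ZZ$, and let $M \in \ZZ^{\phi(n)\times n}$ be the matrix whose $j^{th}$ column expresses $\zeta^j$ in some fixed $\ZZ$-basis of $\ZZ[\zeta]$.  The crucial observation is that $b$ lies in $\ker M$, since
$$
\sum_{j=0}^{n-1} b_j\,\zeta^j \;=\; \sum_{j=0}^{\phi(n)} c_j\,\zeta^j \;=\; \Phi_n(\zeta) \;=\; 0.
$$
By Proposition~\ref{prop:cyclotomic-and-simplicial-are-dual}, every row of the boundary matrix $\partial:C_{d-1}(K;\ZZ)\to C_{d-2}(K;\ZZ)$ likewise lies in $\ker M$.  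A rank count (both the row space of $\partial$ and $\ker M$ have $\QQ$-rank $n-\phi(n)$ — the former because $\rank_\ZZ Z_{d-1} = \phi(n)$, the latter because $M$ has rank $\phi(n)$) then shows that the two spans coincide over $\QQ$.

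Finally, any $z \in Z_{d-1} = \ker\partial$ is $\ZZ$-orthogonal to every row of $\partial$, hence $\QQ$-orthogonal to their $\QQ$-span $\ker M \otimes \QQ$, and therefore $\ZZ$-orthogonal to every element of $\ker M$ — in particular to $b$.  This forces $\langle b,z\rangle = 0$, completing the argument.  I expect the main obstacle to be only the bookkeeping in passing from $\QQ$-span to $\ZZ$-orthogonality; no lattice-saturation issue arises because we only need $z$ to annihilate $b$, not the stronger assertion that $b$ itself be a $\ZZ$-linear combination of the rows of $\partial$.
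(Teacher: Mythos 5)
Your proof is correct, and it takes a genuinely different route from the paper's. The paper constructs an explicit $(d-2)$-cochain whose coboundary is $b$: it extends the $\ZZ$-basis $\{[z_{j\bmod n}]\}_{j=\phi(n)}^{n-1}$ of $\tilde{Z}_{d-2}$ to a basis of $\tilde{C}_{d-2}$, takes the change-of-basis automorphism $g$, and verifies directly (using Theorem~\ref{prop:homology-attaching}(ii), hence ultimately Theorems~\ref{thm:second-interpretation} and \ref{thm:sign-interpretation}) that $\partial^*g^*[f_0]^*$ equals $b$. Your argument is non-constructive: you observe that $b$ is automatically a cocycle, reduce the vanishing of $[b]\in\tilde{H}^{d-1}(K;\ZZ)$ to the vanishing of the pairing $\langle b,z\rangle$ on integer cycles via the Universal Coefficient Theorem (valid because $K$, a join of discrete sets, is a wedge of $\phi(n)$ copies of $\SSS^{d-1}$ and so $\tilde{H}_{d-2}(K;\ZZ)=0$, making the $\mathrm{Ext}$ term vanish), and then establish $\langle b,z\rangle=0$ by the identity $\Phi_n(\zeta)=0$ together with Proposition~\ref{prop:cyclotomic-and-simplicial-are-dual} and a rank count showing that $\ker M\otimes\QQ$ equals the $\QQ$-row space of $\partial_{d-1}$.

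The two proofs buy different things. Yours is more self-contained: it bypasses Theorem~\ref{prop:homology-attaching} and the main theorems entirely, using only the elementary duality of Proposition~\ref{prop:cyclotomic-and-simplicial-are-dual} and the identity $\Phi_n(\zeta)=0$; it also isolates precisely where integrality enters (through the torsion-freeness of $H^{d-1}$, i.e.\ $\tilde{H}_{d-2}=0$, via UCT), which you flag correctly at the end. The paper's proof is constructive, exhibiting an actual $(d-2)$-cochain $g^*[f_0]^*$ mapping to $b$ under $\delta$, which is what motivates the subsequent Question about finding a \emph{natural} such cochain. One small point: your closing remark that ``no lattice-saturation issue arises'' is a bit misleading in phrasing --- the saturation of $\mathrm{im}(\delta)$ in $C^{d-1}$ is exactly what you need, and it is exactly what the vanishing of $\tilde{H}_{d-2}$ (hence of $\mathrm{Ext}^1(\tilde{H}_{d-2},\ZZ)$) delivers through UCT; you are not avoiding the issue so much as resolving it by the cleanest available homological tool.
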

\begin{proof}
Extend the $\ZZ$-basis 
$\{ [z_{j \mod n}] \}_{j=\phi(n)}^{n-1}$ 
for the $(d-2)$-cycles
$\tilde{Z}_{d-2}$ to a $\ZZ$-basis 
for the $(d-2)$-chains $\tilde{C}_{d-2}$ of $K_{p_1,\ldots,p_d}$.
Then let $g$ in $GL_\ZZ(\tilde{C}_{d-2})$ send this new basis
to the standard basis of oriented $(d-2)$-faces $[f]$,
and denote by $f_0$ the $(d-2)$-face for which $[f_0]=g[z_{\phi(n) \bmod{n}}]$.
Theorem~\ref{prop:homology-attaching}(ii) shows that for 
$j=0,1,\ldots,\phi(n)$, the coefficient of $[f_0]$ 
when expanding $[z_{j \bmod{n}}]$ in the above
basis for $\tilde{Z}_{d-2}$ is $c_j$.  Thus
$g[z_{j \bmod{n}}]$ has coefficient $c_j$ on $[f_0]$.
This show that the $(d-2)$-cochain $[f_0]^*$ dual to $[f_0]$
has the property that the coboundary map $\partial^*$ sends 
$g^* [f_0]^*$ to $b$:
$$
\begin{aligned}
\partial^* g^* [f_0]^* ([F_{j \bmod{n}}]) 
&= [f_0]^* \left( g \partial[F_{j \bmod{n}}] \right) 
= [f_0]^* \left( g [z_{j \bmod{n}}] \right) \\
&= \begin{cases}
c_j & \text{ if } 0 \leq j \leq \phi(n) \\
0  & \text{ if } \phi(n)+1 \leq j \leq n-1.
\end{cases}
\end{aligned}
$$
\end{proof}

\begin{question}
Is there a natural choice of a $(d-2)$-chain having
coboundary $b$?
\end{question}
\noindent
An affirmative answer would be helpful in writing 
down the coefficients of $\Phi_n(x)$.

\end{remark}

We next give a homotopy-theoretic version of
Theorem~\ref{prop:homology-attaching}.

\begin{theorem}
For $d \geq 4$, and every  $A \subseteq \{0,1,\ldots,\phi(n)\}$,
the complex $K_A$ is simply-connected.  Consequently, for $d \neq 3$, 
one has the following.
\begin{enumerate}
\item[(i)]
The complex $K_\varnothing$ is homotopy equivalent
to $\SSS^{d-2}$, and contains $[z_{\phi(n) \bmod{n}}]$ as
a fundamental $(d-2)$-cycle.
\item[(ii)]
For $j=0,1,\ldots,\phi(n)$, the cyclotomic
polynomial coefficient $c_j$ gives the 
degree of the attaching map from the
oriented boundary $[z_{j \bmod{n}}]$ of the
facet $F_{j \bmod{n}}$ into 
the homotopy $(d-2)$-sphere $K_\varnothing$,
with respect to the choice of 
$[z_{\phi(n) \bmod{n}}]$ as
the fundamental cycle.
\item[(iii)]
In particular, the complex $K_{\{j\}}$ 
is homotopy equivalent to $\SSS^{d-2} \cup_{f_j} \BBB^{d-1}$
where $\deg(f_j)=c_j$.
\end{enumerate}
\end{theorem}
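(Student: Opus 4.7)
The plan is to combine the classical join-connectivity theorem with Hurewicz and Whitehead's theorems, pulling everything back to the homology computation of Theorem~\ref{prop:homology-attaching}. The principal technical input is that for $d \geq 3$ the complex $K_{p_1,\ldots,p_d}$ is not merely simply-connected but in fact $(d-3)$-connected; this strengthening of the theorem's simple-connectivity assertion is what I will need for parts (i)--(iii) when $d \geq 4$.

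First I would show that for any $A \subseteq \{0,1,\ldots,\phi(n)\}$ and any $d \geq 3$, the subcomplex $K_A$ is $(d-3)$-connected. By Proposition~\ref{full-skeleton-prop}, the subcomplex $K_A$ and the ambient $K_{p_1,\ldots,p_d}$ share the same $(d-2)$-skeleton, and since $\pi_i$ of a CW-complex depends only on its $(i+1)$-skeleton one obtains $\pi_i(K_A) \cong \pi_i(K_{p_1,\ldots,p_d})$ for all $i \leq d-3$. The join-connectivity theorem, which asserts that $X * Y$ is $(m+n+2)$-connected whenever $X$ is $m$-connected and $Y$ is $n$-connected, applied iteratively to the $d$ nonempty (hence $(-1)$-connected) vertex sets $K_{p_1},\ldots,K_{p_d}$, shows that $K_{p_1,\ldots,p_d} = K_{p_1} * \cdots * K_{p_d}$ is $(d-3)$-connected. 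This immediately gives the theorem's opening assertion that $K_A$ is simply-connected when $d \geq 4$.

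For part (i) with $d \geq 4$, since $K_\varnothing$ is $(d-3)$-connected with $\tilde{H}_{d-2}(K_\varnothing) \cong \ZZ$ generated by $[z_{\phi(n)\bmod{n}}]$ (by Theorem~\ref{prop:homology-attaching}(i)), the Hurewicz theorem produces a map $g\colon \SSS^{d-2} \to K_\varnothing$ representing this generator in $\pi_{d-2}$. Then $g_*$ is an isomorphism on homology in every degree, so the homological form of Whitehead's theorem for simply-connected CW-complexes shows $g$ is a homotopy equivalence. For (iii), the complex $K_{\{j\}}$ is built from $K_\varnothing$ by attaching one $(d-1)$-cell $F_{j\bmod{n}}$ along the cycle $z_{j\bmod{n}}$; composing with the homotopy inverse of $g$ turns this into an attaching map $f_j\colon \SSS^{d-2} \to \SSS^{d-2}$, whose degree is by definition the coefficient of $[z_{\phi(n)\bmod{n}}]$ when $[z_{j\bmod{n}}]$ is expressed in the fundamental-class basis of $\tilde{H}_{d-2}(K_\varnothing)$. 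Theorem~\ref{prop:homology-attaching}(ii) identifies this coefficient with $c_j$, giving (ii). The case $d=2$ must be treated separately since the join bound only guarantees $(-1)$-connectivity; but here $K_\varnothing$ is a $1$-dimensional complex with $\tilde{H}_0 \cong \ZZ$ and $\tilde{H}_1 = 0$, hence a disjoint union of two finite trees, homotopy equivalent to $\SSS^0$, and (ii), (iii) reduce to checking that adding the edge $F_{j\bmod{n}}$ creates a $1$-cycle (when $c_j=0$) or joins the two components (when $c_j=\pm 1$), matching the degree of the relevant self-map of $\SSS^0$.

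The main obstacle I anticipate is the bookkeeping required in (ii): one must check that the integer classifying the attaching-map homotopy class as a self-map of $K_\varnothing \simeq \SSS^{d-2}$ coincides with the integer $c_j$ extracted by Theorem~\ref{prop:homology-attaching}(ii). This is not a deep issue, but it forces one to fix the fundamental class $[z_{\phi(n)\bmod{n}}]$ consistently on source and target of $g$, and to invoke the fact that degrees of maps between oriented spheres are computed by their induced maps on top reduced homology — after which the identification is automatic.
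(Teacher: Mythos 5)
Your argument is correct, and its overall architecture — first establish simple-connectivity, then apply Hurewicz and the homological Whitehead theorem, reducing the degree computation to Theorem~\ref{prop:homology-attaching}(ii) — is the same as the paper's.  The one genuinely different step is how simple-connectivity of $K_A$ is obtained for $d \geq 4$.  The paper argues via shellability: since $K_A$ and $K_{p_1,\ldots,p_d}$ share their $2$-skeleton (Proposition~\ref{full-skeleton-prop}), and the $(d-2)$-skeleton of $K_{p_1,\ldots,p_d}$ is shellable (Proposition~\ref{prop:shellability}) and hence homotopy equivalent to a wedge of $(d-2)$-spheres, $\pi_1(K_A)$ is forced to vanish; the higher connectivity of $K_\varnothing$ then comes for free from its vanishing low-degree homology plus simple-connectivity, via Hurewicz.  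You instead invoke the join-connectivity theorem for the ambient complex $K_{p_1} * \cdots * K_{p_d}$ together with the skeletal isomorphism $\pi_i(K_A)\cong\pi_i(K_{p_1,\ldots,p_d})$ for $i \leq d-3$, deducing $(d-3)$-connectivity of $K_A$ directly.  This is also perfectly valid and is arguably more self-contained (it does not rely on shellability at all for this step), though the bound you state is not sharp — the $d$-fold join of nonempty complexes is $(d-2)$-connected, not merely $(d-3)$-connected — a harmless looseness here.  Your explicit treatment of $d=2$ (where the join bound degenerates) and the paper's terse ``follows trivially'' are equivalent; you omit $d=1$, which the paper also covers but which is degenerate enough not to matter.
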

\begin{proof}
For $d=1,2$, the assertions (i),(ii),(iii) follow trivially
from Theorem~\ref{prop:homology-attaching}.  

When $d \geq 4$, first observe that
the fundamental group of $K_A$ is determined by its $2$-skeleton,
which is the same as the $2$-skeleton of  $K_{p_1,\ldots,p_d}$,
by Proposition~\ref{full-skeleton-prop}.  The latter skeleton is
shellable by Proposition~\ref{prop:shellability}, hence homotopy
equivalent to a wedge of $(d-2)$-spheres, and therefore simply-connected.

For the remaining assertions when $d\geq 4$, 
since $K_\varnothing$ is simply-connected and has the homology
of $\SSS^{d-2}$ by Theorem~\ref{prop:homology-attaching}(i),
assertion (i) follows from a standard application of the 
Hurewicz isomorphism theorem \cite[Theorem 5 part (ii), p.398]{Spanier} and 
the homological Whitehead theorem \cite[Cor. 4.33]{Hatcher}.
Assertion (ii) now follows from Theorem~\ref{prop:homology-attaching}(ii),
and assertion (iii) follows combining assertions (i) and (ii).

\end{proof}

\begin{question}  \rm
Let $d=3$, so that $n=p_1 p_2 p_3$ for three distinct primes $p_1,p_2,p_3$.
\begin{enumerate}
\item[$\bullet$]
Is $K_\varnothing$ homotopically equivalent to the circle $\SSS^1$?
\item[$\bullet$]
Is $K_{\{j\}}$ homotopically equivalent to $\BBB^2 \cup_{f_j}\SSS^1$,
where $\deg(f_j)=c_j$, for $j=0,1,\ldots,\phi(n)$?
\end{enumerate}
\end{question}

\noindent
One might hope, for example, to achieve such homotopy equivalences
by a sequence of elementary collapses.  However,
in the example of $n=105 = 3 \cdot 5 \cdot 7$, with $\phi(n)=48$,
a computer exploration indicated that 
\begin{enumerate}
\item[$\bullet$] $K_\varnothing$ did not collapse down to something
homeomorphic to $\SSS^1$, 
\item[$\bullet$]
the $\ZZ$-acyclic complexes 
$K_{\{0\}}, K_{\{48\}}$ did not collapse down to a point, and
\item[$\bullet$]
the complex $K_{\{7\}}$, which Theorem~\ref{prop:homology-attaching}
predicts has the homology of a
real projective plane $\RR P^2$ since $c_7=-2$, did not collapse down
to an $\RR P^2$.
\end{enumerate}

\section{Concordance with properties of $\Phi_n(x)$}
\label{sec:concordance}

Subtleties in the spaces $K_{\{j\}}$ make it not yet clear whether
Theorems~\ref{thm:second-interpretation} and \ref{thm:sign-interpretation} 
will prove useful
in approaching classical questions about the coefficients of $\Phi_n(x)$, e.g. as discussed in
\cite{Bachman, Elder, Moree, Kaplan, Zhao}.  Nevertheless,
we briefly explain here how various well-known properties of $\Phi_n(x)$
manifest themselves topologically in the complexes $K_{\{j\}}$ and $K_{\{j,j'\}}$
that appear in Theorems~\ref{thm:second-interpretation} and~\ref{thm:sign-interpretation}.

\subsection{Two prime factors and graphs}

When $d=2$ so that $n = p_1p_2$ is the product of only two primes, the 
subcomplexes $K_{\{j\}}$ of $K_{p_1,p_2}$ are $1$-dimensional, that is, graphs.
Hence their $(d-2)$-dimensional homology is their $0$-dimensional homology, 
which is always torsion-free.  It follows that the only nonzero 
coefficients of $\Phi_{p_1 p_2}(x)$ are $\pm 1$,
agreeing with a well-known old observation of Migotti \cite{Migotti}.
The explicit expansion of $\Phi_{p_1 p_2}(x)$ is given in
Elder \cite{Elder}, Lam and Leung \cite{LamLeung}, and Lenstra \cite{Lenstra}.

\subsection{Coefficient symmetry and simplicial automorphisms}
It is not hard to see from \eqref{eqn:recurrence-precursor} 
that, for $n > 1$, 
the cyclotomic polynomial $\Phi_n(x) = \sum_{j =0}^{\phi(n)} c_j x^j$ is palindromic,
that is, $c_j=c_{\phi(n)-j}$.  
For squarefree $n=p_1 \cdots p_d$,
this coefficient symmetry is reflected in certain simplicial automorphisms
of the complex $K_{p_1,\ldots,p_d}$ which we discuss here. 

Note that any
element of the product of symmetric groups $\symm_{p_1} \times \cdots \times \symm_{p_d}$ 
that separately permutes each of the vertex sets $K_{p_1},\ldots,K_{p_d}$ gives rise to a
simplicial automorphism of $K_{p_1,\ldots,p_d}$, with the property that
it sends a positively oriented facet $[F_{j \bmod{n}}]$ as in 
\eqref{eqn:oriented-facet} to another positively oriented facet.
We focus on a subgroup of these automorphisms isomorphic to the dihedral group
of order $2n$
$$
D_{2n}:=\langle s, r : s^2=r^n=1, srs=r^{-1} \rangle.
$$
Let $s, r$ act simultaneously on each vertex set $K_{p_i}$ as follows:
$$
\begin{aligned}
j \bmod{p_i}& \overset{s}{\longmapsto} &-j \bmod{p_i} \\
j \bmod{p_i}& \overset{r}{\longmapsto} &j+1 \bmod{p_i}. \\
\end{aligned}
$$
This induces their action on oriented facets as follows:
$$
\begin{aligned}[]
[F_{j \bmod{n}}] & \overset{s}{\longmapsto} &[F_{-j \bmod{n}}] \\
[F_{j \bmod{n}}] & \overset{r}{\longmapsto} &[F_{j+1 \bmod{n}}]. \\
\end{aligned}
$$
In particular, the element $t:=r^{\phi(n)}s$ will be an involution 
that swaps the oriented facets $[F_{j \bmod{n}}], [F_{\phi(n)-j \bmod{n}}]$
for each $j$, and hence 
maps the complex $K_{\{j\}}$ isomorphically onto the complex $K_{\{\phi(n)-j\}}$.
This shows, via Theorem~\ref{thm:second-interpretation}, that
$c_j = \pm c_{\phi(n)-j}$.

Furthermore, $t$ maps $K_{\{j,\phi(n)\}}$ isomorphically onto 
$K_{\{\phi(n)-j,0\}}$.  This shows via Theorem~\ref{thm:sign-interpretation}
that $c_j ,c_{\phi(n)-j}$ must also have the same sign:
their sign difference would have to be the same as the sign difference between the leading coefficient
$c_{\phi(n)}(=+1)$ and the constant coefficient $c_0(=+1)$.

\subsection{Cyclotomic polynomials for even $n$ and suspension}

  It is also well-known, and follows from \eqref{eqn:recurrence-precursor},
that $\phi(2n)=\phi(n)$ for $n$ odd, and that the two cyclotomic polynomials
$$
\Phi_{2n}(x) = \sum_{j=0}^{\phi(n)} \hat{c}_j x^j
\quad \text{ and } \quad
\Phi_{n}(x) = \sum_{j=0}^{\phi(n)} c_j x^j
$$
determine each other by $\Phi_{2n}(x) = \Phi_n(-x)$.  Equivalently,
$\hat{c}_j=(-1)^j c_j$ for all $j$.
When $n=p_1 \cdots p_d$ is squarefree, 
this manifests itself topologically as follows.  

The complex 
$\hat{K}:=K_{2,p_1,\ldots,p_d}$ relevant to $\Phi_{2n}(x)$
is the two-point suspension $\Sigma K$ of the complex $K:=K_{p_1,\ldots,p_d}$
relevant to $\Phi_n(x)$.  Here the two
suspension vertices in $\hat{K}$ are labelled $(0 \bmod{2})$ and
 $(1 \bmod{2})$.  
This means that every facet $F_{\ell \bmod{n}}$ of the complex $K$
is contained in exactly two facets $\hat{F}_{\ell \bmod{2n}}$ and 
$\hat{F}_{\ell+n \bmod{2n}}$ of $\hat{K}$.

Now consider the two subcomplexes $\hat{K}_{\{j\}}, K_{\{j\}}$ of $\hat{K}, K$ 
whose homology torsion predict the two coefficients $\hat{c}_j, c_j$ up to sign.
We claim that $\hat{K}_{\{j\}}$ contains the 
two-point suspension $\Sigma K_{\{j\}}$ as a deformation retract.
Specifically, $\Sigma K_{\{j\}}$ is the subcomplex of
$\hat{K}_{\{j\}}$ generated by the facets $\{ \hat{F}_{\ell \bmod{2n}} \}$
as $\ell$ runs through 
$$
\{j,n+j\} 
\quad
\cup
\quad
\{\phi(n)+1, n+\phi(n)+1,
\quad
\phi(n)+2,n+\phi(n)+2,
\quad
\ldots,
\quad
n-1,2n-1\}.
$$
The collection $\FFF$ of facets of $\hat{K}_{\{j\}}$ lying outside
the subcomplex  $\Sigma K_{\{j\}}$ is given by 
$
\{ \hat{F}_{\ell +n \bmod{2n}} \}
$
for $\ell$ in $\{0,1,\ldots,\phi(n)\} \setminus \{j\}$.
Each such facet $\hat{F}_{\ell+n \bmod{2n}}$ in $\FFF$
has a {\it free} codimension one face, 
namely the facet $F_{\ell \bmod{n}}$ of $K_{\{j\}}$,
because the facet $\hat{F}_{\ell \bmod{2n}}$ of $\hat{K}$ 
is absent from $\hat{K}_{\{j\}}$.
Thus one can remove the facets in $\FFF$ from $\hat{K}_{\{j\}}$ via a sequence
of elementary collapses, leaving the retract $\Sigma K_{\{j\}}$.
This explains why $\hat{c}_j = \pm c_j$.

The exact sign relationship $\hat{c}_j = (-1)^j c_j$
comes from a similar relationship between the
complexes $\hat{K}_{\{j,j'\}}, K_{\{j,j'\}}$ of $\hat{K}, K$:  
the two-point suspension $\Sigma K_{\{j,j'\}}$ is a deformation
retract of $\hat{K}_{\{j,j'\}}$.  One must also analyze the relation
between the orientations of facets in a $(d-2)$-cycle $z$ of $K_{\{j,j'\}}$ 
versus the orientations of the analgous facets in the suspended $(d-1)$-cycle
$\Sigma(z)$ of $\hat{K}_{\{j,j'\}}$; we omit this detailed analysis.

\section{Acknowledgements}
The authors wish to credit Jeremy Martin for the original
observation that the cyclotomic matroid $\mu_n$ with $n=p_1 p_2$ is cographic,
as well as several helpful comments.  They also thank Sam Elder, 
Nathan Kaplan, and Pieter Moree for helpful references on cyclotomic polynomials, 
Dmitry Fuchs for suggestions that led to Section~\ref{sec:attachment}, and two anonymous 
referees for helpful edits.  They are very grateful to Giacomo Condr\`o for pointing out the
error in previous versions of this paper noted in Remark~\ref{Condro-remark}.


\begin{thebibliography}{99}

\bibitem{Adin}
R.M.~Adin, 
Counting colorful multi-dimensional trees.
\textit{Combinatorica} {\bf 12} (1992), 247--260.

\bibitem{Bachman}
G. Bachman,
On the Coefficients of Cyclotomic Polynomials
{\it Memoirs of the American Mathematical Society} {\bf 106}, 
1993.

\bibitem{Bjorner}
A. Bj\"orner,
Shellable and Cohen-Macaulay partially ordered sets.  
{\it Trans. Amer. Math. Soc.}  {\bf 260}  (1980), no. 1, 159--183.

\bibitem{BjornerTopMeth}
A. Bj\"orner,
Topological methods.  
{\it Handbook of combinatorics, Vol. II}, 1819--1872.
Elsevier, Amsterdam, 1995

\bibitem{BjornerWachs}
A. Bj\"orner and M. Wachs,
Shellable nonpure complexes and posets. II.  
{\it Trans. Amer. Math. Soc.}  {\bf 349}  (1997),  no. 10, 3945--3975. 


\bibitem{Bolker}
E.D.~Bolker, 
Simplicial geometry and transportation polytopes.
\textit{Trans.\ Amer.\ Math.\ Soc.} {\bf 217} (1976), 121--142.

\bibitem{CrapoRota}
H.H.~Crapo and G.-C.~Rota,
On the foundations of combinatorial theory: Combinatorial geometries.
Preliminary edition.  The M.I.T.\ Press, Cambridge, Mass.-London, 1970.

\bibitem{DuvalKlivansMartin}
A.M. Duval, C.J. Klivans, and J.L Martin, 
Simplicial matrix-tree theorems. 
{\it Trans. Amer. Math. Soc.} {\bf 361} (2009), no. 11, 6073--6114. 

\bibitem{DuvalKlivansMartin-personal-communication}
A.M. Duval, C.J. Klivans, and J.L Martin,
personal communication, January 2011.

\bibitem{Elder}
S. Elder, Flat cyclotomic polynomials: a new approach, manuscript 
in preparation; see also 
{\tt http://www.coloradomath.org/handouts/flat\_cyclotomic.pdf}

\bibitem{Moree}
Y. Gallot and P. Moree, 
Neighboring ternary cyclotomic coefficients differ by at most one. 
{\it J. Ramanujan Math. Soc.} {\bf 24} (2009), no. 3, 235–248. 

\bibitem{Hatcher}
A. Hatcher, Algebraic Topology.
Cambridge University Press, Cambridge, 2002.

\bibitem{Johnsen}
K.~Johnsen,
Lineare Abh\"angigkeiten von Einheitswurzeln.
\textit{Elem.\ Math.} {\bf 40} (1985), 57--59.

\bibitem{Kalai}
G.~Kalai, 
Enumeration of $\QQ$-acyclic simplicial complexes.
\textit{Israel J.\ Math.} {\bf 45} (1983), 337--351.

\bibitem{Kaplan}
N. Kaplan, 
Flat cyclotomic polynomials of order three.
\textit{J. Number Theory} {\bf 127} (2007), no. 1, 118--126. 

\bibitem{Kaz}
G. S.~ Kazandzidis,
On the cyclotomic polynomial: Coefficients. 
\textit{Bull. Soc. Math. Gr\'{e}ce} (N.S.) {\bf 4} (1963), no. 1, 1--11. 

\bibitem{LamLeung}
T.Y. Lam and K.H. Leung, 
On the Cyclotomic Polynomial $\Phi_{pq}(X)$.
{\it Amer. Math. Monthly} {\bf 103} (1996), 562--564. 

\bibitem{Lenstra}
H.W. Lenstra, 
Vanishing sums of roots of unity, in: Proceedings, Bicentennial Congress Wiskundig Genootschap, Vrije Univ., Amsterdam, 1978, Part II, 1979, pp. 249--268

\bibitem{MartinR}
J.~Martin and V.~Reiner,
Cyclotomic and simplicial matroids.  
{\it Israel J. Math.}  {\bf 150}  (2005), 229--240.


\bibitem{Maxwell}
M. Maxwell,
Enumerating bases of self-dual matroids. 
{\it J. Combin. Theory Ser. A} {\bf 116} (2009), no. 2, 351--378. 

\bibitem{Meshulam}
R. Meshulam,
Homology of balanced complexes
via the Fourier transform.
{\it J. Algebraic Combin.} {\bf 35} (2012) 565--571.

\bibitem{Migotti}
A. Migotti,
Zur Theorie der Kreisteilungsgleichung.
{\it Sitzber. Math.-Naturwiss. Classe der Kaiser. Akad. der Wiss., Wien}
{\bf 87} (1883), 7--14.

\bibitem{Munkres}
J.R.~Munkres,
Elements of algebraic topology.
Addison-Wesley Publishing Company, Menlo Park, CA, 1984.

\bibitem{Oxley}
J.G.~Oxley,
Matroid theory.
Oxford Science Publications.
The Clarendon Press, Oxford University Press, New York, 1992.

\bibitem{ProvanBillera}
J.S. Provan and L.J. Billera, Decompositions of simplicial complexes related to diameters of convex polyhedra.
{\it Math. Oper. Res.} {\bf 5} (1980), no. 4, 576--594.

\bibitem{Spanier}
E.H. Spanier, 
Algebraic topology.
Springer-Verlag, New York-Berlin, 1981. 

\bibitem{Welsh}
D.J.A.~Welsh,
Matroid theory.
London Math.\ Soc.\ Monographs {\bf 8}.
Academic Press, London-New York, 1976.

\bibitem{White1}
N.~White,
Theory of matroids.
\textit{Encyclopedia of Mathematics and its Applications} {\bf 26}.
Cambridge University Press, Cambridge, 1986.

\bibitem{White2}
N.~White,
Combinatorial geometries.
\textit{Encyclopedia of Mathematics and its Applications} {\bf 29}.
Cambridge University Press, Cambridge, 1987.

\bibitem{White3}
N.~White,
Matroid applications.
\textit{Encyclopedia of Mathematics and its Applications} {\bf 40}.
Cambridge University Press, Cambridge, 1992.

\bibitem{Zhao}
J. Zhao and X. Zhang, 
Coefficients of ternary cyclotomic polynomials. 
\textit{J. Number Theory} {\bf 130} (2010), no. 10, 2223--2237.

\end{thebibliography}
\end{document}